\newtheorem{theorem}{Theorem}[section]
\newtheorem{lemma}[theorem]{{\bf Lemma}}
\newtheorem{ex}[theorem]{{\bf Example}}
\newtheorem{definition}{Definition}[section]
\numberwithin{equation}{section}
\newenvironment{proof}{\indent{\em Proof:}}{\quad \hfill
$\Box$\vspace*{2ex}}
\begin{document}
\setcounter{page}{1}
\begin{center}
\vspace{0.4cm} {\large{\bf On the Nonlinear Impulsive $\Psi$--Hilfer Fractional Differential Equations}}\\
\vspace{0.5cm}
Kishor D. Kucche $^{1}$ \\
kdkucche@gmail.com \\
\vspace{0.35cm}
Jyoti P. Kharade $^{2}$\\
jyoti.thorwe@gmail.com\\
\vspace{0.35cm}
 J. Vanterler da C. Sousa $^{3}$\\
ra160908@ime.unicamp.br\\
\vspace{0.35cm}
$^{1,2}$ Department of Mathematics, Shivaji University, Kolhapur-416 004, Maharashtra, India.\\
$^{3}$  Department of Applied Mathematics, Imecc-Unicamp, 13083-859, Campinas, SP, Brazil.

\end{center}
\def\baselinestretch{1.0}\small\normalsize
\begin{abstract}
In this paper, we consider the nonlinear $\Psi$-Hilfer impulsive fractional differential equation. Our main objective is to derive the formula for the solution and examine the existence and uniqueness of results. The acquired results are extended to the nonlocal $\Psi$-Hilfer impulsive fractional differential equation.  
We gave an applications to the outcomes we procured. Further, examples are provided in  support of the results we got.
\end{abstract}
\noindent\textbf{Key words:} $\Psi$--Hilfer fractional derivative;  fractional differential equations; Impulsive; Nonlocal; Existence and Uniqueness;  Fixed point theorem.
 \\
\noindent
\textbf{2010 Mathematics Subject Classification:} 26A33, 34A08, 34A12, 34G20.
\def\baselinestretch{1.5}

\allowdisplaybreaks
\section{Introduction}

The fractional differential equations (FDEs) over the years have been the object of investigation by many researchers \cite{wang22}--\cite{zhou}. The fact is that certain natural phenomena by means of fractional differential equations are modeled and allows to better describe the real situation of the problem compared to the problem modeled by means of differential equations of whole order \cite{rica}--\cite{sousa6}. Recently, Sousa et al. \cite{sousa7} presented a fractional mathematical model by means of the time-fractional diffusion equation, which describes the concentration of nutrients in the blood and allows analyzing the solution of the model, better than the integer case. In addition, other mathematical models can be obtained in the literature involving fractional differential equations \cite{dipi4}--\cite{dipi3}.

On the other hand, investigating the existence, uniqueness and stability of solutions of FDEs of the following types: functional, impulsive, evolution, with instantaneous and non-instantaneous impulses \cite{benchohra3}--\cite{wang2}. In this direction the subject  has picked up strength and interest of the researchers, since the fractional derivatives allows the variation of the order of the differential equation that is straightforwardly associated with the solution of such FDEs.

Eminent mathematicians working in the field of FDEs, has been exhibiting critical and fascinating outcomes throughout the years that contribute  significantly to the mathematical analysis of FDEs, few of them are: Balachandran ,Trujillo \cite{Krish}, Zhou \cite{zhou}, Wang \cite{wang2}, Feckan \cite{fec}, Benchohra \cite{benchohra3}, O'Regan \cite{Ravi},  Kilbas \cite{Kilbas}, JinRong Wang \cite{Jinr}, Agarwal \cite{agarwal}, Diethelm \cite{Diethelm}, Guo \cite{guo} and Mophou \cite{mophou}.

The FDEs with impulsive effect play vital role in modeling real world physical phenomena involving in the study of population
dynamics, biotechnology  and chemical technology. Advancement in the theory of impulsive  differential equations and its applications in the real world phenomena have been marvelously given in the monographs of Bainov and Simeonov \cite{bainov}, Benchohra et al \cite{BenHer} and Samoilenko and Perestyuk \cite{LakBai}.

In 2009, Benchohra and Slimani \cite{Benchohra} investigated various criterion for the existence of solutions for a class of initial value problems for impulsive fractional differential equations
given by
\begin{equation} \label{imp01}
\begin{cases}
^c D^{\mu}y(t)=f(t, y(t)),~t \in [0,T], t\neq t_{k},\\
\Delta y|_{t=t_{k}}= I_{k}(y(t_{k}^-)) \\
 y(0)=y_{0} \in \mathbb{R},
\end{cases}
\end{equation}
where $^c D^{\mu}(\cdot)$ is the Caputo fractional derivative of order $0< \mu \leq 1$, $f: [0,T] \times \mathbb{R} \rightarrow \mathbb{R}$ is a given function, $I_{k}:\mathbb{R} \rightarrow \mathbb{R}$, $k=1,...,m$ , $0=t_{0}<t_{1}<\cdots <t_{m}<t_{m+1}=T$, $\Delta y |_{t=t{k}}=y(t_{k}^{+})-y(t_{k}^{-})$, $y(t^{+}_{k})=\lim_{h \rightarrow 0^-} y(t_{k}+h)$, $k=1,2,...,m$. 

Benchohra and Seba \cite{benchohra} extended the study of existence  for  impulsive FDEs \eqref{imp01} in the Banach spaces. The following year, Benchohra and Berhoun \cite{benchohra3}, investigated sufficient conditions for the existence of solutions for impulsive FDEs  with variable times.

In  \cite{fec}  Feckan et al. with the help of the examples it is demonstrated  that the  formula for the solutions of fractional impulsive FDEs \eqref{imp01} considered  in the few referred  papers in \cite{fec} were incorrect. They have derived the valid  formula for the solution of  impulsive FDEs \eqref{imp01} involving Caputo derivative and investigated the existence results for \eqref{imp01} using Banach contraction principle and Leray-Schauder theorem.

In another interesting paper \cite{wang1}, Wang and coauthor presented the idea of piecewise continuous solutions for Caputo fractional  impulsive Cauchy problems and impulsive fractioanl boundary value problem. They acquired  existence and uniqueness of solution and furthermore determined data dependence and Ulam stabilities of solutions by means of  generalized singular Gronwall inequalities.

 It is noticed that numerous works with refined and important mathematical tools have been published and others that are yet to come \cite{sousa,sousa1,Krish,fec,Ravi,Jinr}. In any case, it is advantageous to utilize more broad fractional derivatives in which they hold a wide class of fractional derivatives as particular cases, particularly the traditional ones of Caputo and Riemann-Liouville (RL). Another fundamental  advantage is the fact that the properties of the general fractional derivative viz, $\Psi$-Hilfer is the preservation of the properties of the respective cases, in particular, in the investigated property of a fractional differential equation, in this case, the existence and uniqueness of solutions \cite{sousa2}--\cite{sousa5}.

In the present paper, we consider the following impulsive $\Psi$-Hilfer fractional differential equation ( impulsive $\Psi$-HFDE ) with initial condition
\begin{align}
& ^H \mathbf{D}^{\mu,\, \nu; \, \Psi}_{a^+}u(t)=f(t, u(t)),~t \in \mathcal{\mathcal{J}}=[a,T]-\{t_1, t_2,\cdots ,t_m\},\label{e11}\\
&\Delta \mathbf{I}_{a^+}^{1-\varrho; \, \Psi}u(t_k)= \zeta_k \in \mathbb{R},  ~k = 1,2,\cdots,m, \label{e12}\\
& \mathbf{I}_{a^+}^{1-\varrho; \, \Psi}u(a)=\delta \in \mathbb{R}, \label{e13}
\end{align}
where $0<\mu<1,~0\leq\nu\leq 1, ~\varrho=\mu+\nu-\mu\nu$,  ~$^H \mathbf{D}^{\mu, \, \nu; \, \Psi}_{a^+}(\cdot)$ is the $\Psi$-Hilfer fractional derivative of order $\mu $ and type $\nu$, ~ $\mathbf{I}_{a^+}^{1-\varrho; \, \Psi}$ is left sided $\Psi$-RL fractional integral operator, $ ~a=t_0< t_1 < t_2 < \cdots < t_m < t_{m+1}=T$,
~$
\Delta \mathbf{I}_{a^+}^{1-\varrho; \, \Psi}u(t_k)= \mathbf{I}_{a^+}^{1-\varrho; \, \Psi}u(t_k^+)- \mathbf{I}_{a^+}^{1-\varrho; \, \Psi}u(t_k^-) 
$, 
~$
 \mathbf{I}_{a^+}^{1-\varrho; \, \Psi}u(t_k^+) = \lim_{\epsilon\to 0^+} \mathbf{I}_{a^+}^{1-\varrho; \, \Psi}u(t_k + \epsilon)$ 
~~\mbox{and}~~
$\mathbf{I}_{a^+}^{1-\varrho; \, \Psi}u(t_k^-) = \lim_{\epsilon\to 0^-} \mathbf{I}_{a^+}^{1-\varrho; \, \Psi}u(t_k + \epsilon).
$

The main motivation for this work comes from the work highlighted above, with the purpose of investigating the existence and uniqueness of solution of impulsive $\Psi$-HFDEs and to provide new and more general results in the field of fractional differential equations.

We highlight here a rigorous analysis of Eq.(\ref{e11})-Eq.(\ref{e13}) regarding the main results and advantages obtained in this paper:
\begin{itemize}[topsep=0pt,itemsep=-1ex,partopsep=1ex,parsep=1ex]
\item With $\Psi(t)=t$ and taking the limits $\beta \rightarrow 0$ and $\beta \rightarrow 1$ of the Eq.(\ref{e11})-Eq.(\ref{e13}), we obtain the respective special cases for the differential equations, that is, the classical fractional derivatives of Riemann-Liouville and Caputo, respectively. In addition to the integer case, by choosing $\alpha$= 1. These are two special cases of fractional derivatives. However, a wide class of fractional derivatives can be obtained from the choice of the parameters $\beta $ and $ \Psi(t)$;

\item Since it is possible to obtain a wide class of derivatives from the choice of $\beta$ and $\Psi(t)$; consequently, it is also possible to obtain a class of fractional differential equations with their respective fractional derivatives, as particular cases;

\item A new class of solutions for impulsive $\Psi$-HFDEs;

\item We investigate the existence and uniqueness results for the impulsive $\Psi$-HFDEs and extend it to the non-local impulsive  $\Psi$-HFDEs.
\end{itemize}
Organization of Paper: In section 2,  some definitions and results that are important for the development of the paper have been provided via Lemmas and Theorems. In section 3, we present a representation formula for the solution, i.e., we show that the  problem (\ref{e11})-(\ref{e13}) is equivalent to the Volterra fractional integral equation. In section 4, we investigated the existence and uniqueness of the  impulsive $\Psi$-HFDE. In Section 5, we will investigate the existence and uniqueness of a nonlocal impulse $\Psi$-HFDE. Concluding and remarks closing the paper.
\section{Preliminaries} \label{preliminaries}
In this section, we introduce preliminary facts 
that are utilized all through this paper.

Let $\mathcal{I}=[a,b ]$ and $\Psi\in C^{1}(\mathcal{I},\mathbb{R})$ an increasing function such that $\Psi'(x)\neq 0$, $\forall~ x\in \mathcal{I}$.
\begin{definition} [\cite{Kilbas}]
 The $\Psi$-Riemann fractional integral of order $\mu>0$ of  the function $h$  is given by  
\begin{equation}\label{21}
\mathbf{I}_{a+}^{\mu ;\, \Psi }h\left( t\right) :=\frac{1}{\Gamma \left( \mu
\right) }\int_{a}^{t}\mathcal{L}_{\Psi}^{\mu}(t,\sigma )h\left( \sigma \right) d\sigma,
\end{equation}
where
\begin{equation*}
\mathcal{L}_{\Psi}^{\mu}(t,\sigma )=\Psi ^{\prime }\left(\sigma \right) \left( \Psi \left(
t\right) -\Psi \left( \sigma \right) \right) ^{\mu-1}
\end{equation*}
\end{definition}

\begin{lemma}[\cite{Kilbas}]
Let $\mu>0$, $\nu>0$ and $\delta >0$. Then: 
\begin{itemize}[topsep=0pt,itemsep=-1ex,partopsep=1ex,parsep=1ex]
\item[(i)] $\mathbf{I}_{a^+}^{\mu;\, \Psi}\mathbf{I}_{a^+}^{\nu ;\, \Psi} h(t)=\mathbf{I}_{a^+}^{\mu+\nu;\, \Psi} h(t)$
\item[(ii)] If $h(t)= (\Psi(t)-\Psi(a))^{\delta-1},$
 then   ~ $\mathbf{I}_{a^+}^{\mu;\, \Psi}h(t)=\frac{\Gamma(\delta)}{\Gamma(\mu+\delta)}(\Psi(t)-\Psi(a))^{\mu + \delta-1}.$
\end{itemize}

\end{lemma}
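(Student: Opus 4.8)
The plan is to reduce both statements to elementary Beta‑function identities by means of the change of variable that ``flattens'' $\Psi$. For part (i), I would first write the left‑hand side out in full using the definition \eqref{21}:
\[
\mathbf{I}_{a^+}^{\mu;\,\Psi}\mathbf{I}_{a^+}^{\nu;\,\Psi} h(t)=\frac{1}{\Gamma(\mu)\Gamma(\nu)}\int_{a}^{t}\Psi'(\sigma)\bigl(\Psi(t)-\Psi(\sigma)\bigr)^{\mu-1}\left(\int_{a}^{\sigma}\Psi'(\tau)\bigl(\Psi(\sigma)-\Psi(\tau)\bigr)^{\nu-1}h(\tau)\,d\tau\right)d\sigma .
\]
Then I would apply Fubini's theorem over the triangle $a\le\tau\le\sigma\le t$ to interchange the two integrations, producing an inner integral in $\sigma$ over $[\tau,t]$. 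The decisive step is the substitution $u=\bigl(\Psi(\sigma)-\Psi(\tau)\bigr)/\bigl(\Psi(t)-\Psi(\tau)\bigr)$, which is admissible because $\Psi\in C^{1}(\mathcal I,\mathbb R)$ is strictly increasing with $\Psi'\neq 0$; under it the inner integral becomes $\bigl(\Psi(t)-\Psi(\tau)\bigr)^{\mu+\nu-1}\int_{0}^{1}u^{\nu-1}(1-u)^{\mu-1}\,du=\bigl(\Psi(t)-\Psi(\tau)\bigr)^{\mu+\nu-1}B(\nu,\mu)$. Inserting $B(\nu,\mu)=\Gamma(\mu)\Gamma(\nu)/\Gamma(\mu+\nu)$ cancels the prefactor and leaves exactly $\mathbf{I}_{a^+}^{\mu+\nu;\,\Psi}h(t)$, as required.

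For part (ii), I would substitute $h(\sigma)=\bigl(\Psi(\sigma)-\Psi(a)\bigr)^{\delta-1}$ directly into \eqref{21} and perform the analogous substitution $u=\bigl(\Psi(\sigma)-\Psi(a)\bigr)/\bigl(\Psi(t)-\Psi(a)\bigr)$. The integral collapses to
\[
\mathbf{I}_{a^+}^{\mu;\,\Psi}h(t)=\frac{1}{\Gamma(\mu)}\bigl(\Psi(t)-\Psi(a)\bigr)^{\mu+\delta-1}\int_{0}^{1}u^{\delta-1}(1-u)^{\mu-1}\,du=\frac{B(\delta,\mu)}{\Gamma(\mu)}\bigl(\Psi(t)-\Psi(a)\bigr)^{\mu+\delta-1},
\]
and $B(\delta,\mu)/\Gamma(\mu)=\Gamma(\delta)/\Gamma(\mu+\delta)$ gives the stated formula. (Alternatively, both parts follow at once by noting that $\mathbf{I}_{a^+}^{\mu;\,\Psi}h(t)$ equals the classical Riemann--Liouville integral of $g:=h\circ\Psi^{-1}$ evaluated at $\Psi(t)$, so the known RL identities transfer verbatim; I would mention this as a remark but give the direct computation.)

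The only genuinely delicate point is the legitimacy of the Fubini interchange in (i): one needs the double integral to converge absolutely. This holds as soon as $h$ is, say, continuous (or merely in $L^{1}(\mathcal I)$), because near $\sigma=\tau$ and near $\tau=t$ the kernels carry only integrable singularities of orders $\nu-1>-1$ and $\mu-1>-1$ respectively, while $\Psi'$ is bounded on the compact interval $\mathcal I$. Once absolute integrability is in hand, Fubini applies and the remainder of the argument is routine Beta‑integral bookkeeping.
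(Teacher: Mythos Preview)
Your argument is correct and complete; the Fubini interchange followed by the Beta-integral substitution is exactly the standard derivation of both identities. Note, however, that the paper does not prove this lemma at all: it is quoted from Kilbas--Srivastava--Trujillo \cite{Kilbas} as a preliminary and stated without proof, so there is no ``paper's own proof'' to compare against---what you have written is essentially the textbook argument found in that reference.
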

\begin{definition} [\cite{{Sousa1}}]
The $\Psi$--Hilfer fractional derivative of  function $h$ of order $\mu$, $(0<\mu<1)$ and of type $0\leq \nu \leq 1$, is defined by
$$^H \mathbf{D}^{\mu, \, \nu; \, \Psi}_{a^+}h(t)= \mathbf{I}_{a^+}^{\nu ({1-\mu});\, \Psi} \left(\frac{1}{{\Psi}^{'}(t)}\frac{d}{dt}\right)^{'}\mathbf{I}_{a^+}^{(1-\nu)(1-\mu);\, \Psi} h(t).$$
\end{definition}

\begin{theorem}[\cite{Sousa1}]\label{ab}
If $h\in C^{1}(\mathcal{I}),$ $0<\mu<1$ and $0\leq\nu \leq 1 $, then
\begin{enumerate}[topsep=0pt,itemsep=-1ex,partopsep=1ex,parsep=1ex]
\item[(i)] $\mathbf{I}_{a^+}^{\mu;\, \Psi}\, {^H \mathbf{D}^{\mu, \, \nu; \, \Psi}_{a^+}}h(t)= h(t)- \Omega_{\Psi}^{\varrho}(t,a)\mathbf{I}_{a^+}^{(1-\nu)(1-\mu);\Psi}h(a),$
where ~$\Omega_{\Psi}^{\varrho}(t,a)=\frac{(\Psi(t)-\Psi(a))^{\varrho-1}}{\Gamma(\varrho)}
$

\item[(ii)] ${^H\mathbf{D}^{\mu, \, \nu; \, \Psi}_{a^+}}\,\mathbf{I}_{a^+}^{\mu;\, \Psi}h(t)=h(t).$
\end{enumerate}
\end{theorem}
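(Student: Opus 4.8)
The plan is to reduce both identities to three ingredients that are either already available or completely elementary: the semigroup property $\mathbf{I}_{a^+}^{p;\Psi}\mathbf{I}_{a^+}^{q;\Psi}=\mathbf{I}_{a^+}^{p+q;\Psi}$ of Lemma 2.1(i); the power rule $\mathbf{I}_{a^+}^{p;\Psi}(\Psi(\cdot)-\Psi(a))^{\delta-1}=\frac{\Gamma(\delta)}{\Gamma(p+\delta)}(\Psi(\cdot)-\Psi(a))^{p+\delta-1}$ of Lemma 2.1(ii); and the ``disguised fundamental theorem of calculus'' $\left(\frac{1}{\Psi'(t)}\frac{d}{dt}\right)\mathbf{I}_{a^+}^{1;\Psi}g(t)=g(t)$, which is immediate because $\mathbf{I}_{a^+}^{1;\Psi}g(t)=\int_a^t\Psi'(\sigma)g(\sigma)\,d\sigma$. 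First I would record the exponent identities that follow from $\varrho=\mu+\nu-\mu\nu$, namely $\mu+\nu(1-\mu)=\varrho$, $\ (1-\nu)(1-\mu)=1-\varrho$, and $(1-\nu)(1-\mu)+\mu=1-\nu(1-\mu)$; these are exactly what make the chains of $\Psi$-integrals telescope.

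For (i): substitute the definition of ${^H\mathbf{D}^{\mu,\nu;\Psi}_{a^+}}$ and apply $\mathbf{I}_{a^+}^{\mu;\Psi}$. By the semigroup law, $\mathbf{I}_{a^+}^{\mu;\Psi}\mathbf{I}_{a^+}^{\nu(1-\mu);\Psi}=\mathbf{I}_{a^+}^{\varrho;\Psi}$ and $(1-\nu)(1-\mu)=1-\varrho$, so the left side becomes $\mathbf{I}_{a^+}^{\varrho;\Psi}\left(\frac{1}{\Psi'(t)}\frac{d}{dt}\right)\mathbf{I}_{a^+}^{1-\varrho;\Psi}h(t)$, i.e.\ $\mathbf{I}_{a^+}^{\varrho;\Psi}$ applied to the $\Psi$-Riemann--Liouville derivative of order $\varrho$ of $h$. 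The heart of the argument is then the $\Psi$-Riemann--Liouville inversion $\mathbf{I}_{a^+}^{\varrho;\Psi}\left(\frac{1}{\Psi'(t)}\frac{d}{dt}\right)\mathbf{I}_{a^+}^{1-\varrho;\Psi}h(t)=h(t)-\frac{(\Psi(t)-\Psi(a))^{\varrho-1}}{\Gamma(\varrho)}\,\mathbf{I}_{a^+}^{1-\varrho;\Psi}h(a)$. I would prove this by commuting $\frac{1}{\Psi'(t)}\frac{d}{dt}$ past $\mathbf{I}_{a^+}^{\varrho;\Psi}$: writing $G=\mathbf{I}_{a^+}^{1-\varrho;\Psi}h$, splitting $G=G(a^+)+\mathbf{I}_{a^+}^{1;\Psi}\big[\big(\tfrac{1}{\Psi'}\tfrac{d}{dt}\big)G\big]$, applying $\mathbf{I}_{a^+}^{\varrho;\Psi}$ (using Lemma 2.1(ii) with $\delta=1$ on the constant $G(a^+)$ and the semigroup law on the remainder), and then differentiating with $\frac{1}{\Psi'(t)}\frac{d}{dt}$: the disguised FTC removes the $\mathbf{I}_{a^+}^{1;\Psi}$ and turns the power $(\Psi(t)-\Psi(a))^{\varrho}$ into the stated boundary term. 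Finally, collapsing $\mathbf{I}_{a^+}^{\varrho;\Psi}\mathbf{I}_{a^+}^{1-\varrho;\Psi}=\mathbf{I}_{a^+}^{1;\Psi}$ and using the disguised FTC once more closes (i), since $\mathbf{I}_{a^+}^{1-\varrho;\Psi}h(a)=\mathbf{I}_{a^+}^{(1-\nu)(1-\mu);\Psi}h(a)$ and $\frac{(\Psi(t)-\Psi(a))^{\varrho-1}}{\Gamma(\varrho)}=\Omega_\Psi^\varrho(t,a)$.

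For (ii): substitute the definition of ${^H\mathbf{D}^{\mu,\nu;\Psi}_{a^+}}$ again; this time the semigroup law merges the two inner integrals as $\mathbf{I}_{a^+}^{(1-\nu)(1-\mu);\Psi}\mathbf{I}_{a^+}^{\mu;\Psi}=\mathbf{I}_{a^+}^{1-\nu(1-\mu);\Psi}$, so ${^H\mathbf{D}^{\mu,\nu;\Psi}_{a^+}}\mathbf{I}_{a^+}^{\mu;\Psi}h(t)=\mathbf{I}_{a^+}^{\lambda;\Psi}\left(\frac{1}{\Psi'(t)}\frac{d}{dt}\right)\mathbf{I}_{a^+}^{1-\lambda;\Psi}h(t)$ with $\lambda=\nu(1-\mu)\in[0,1)$. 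When $\lambda=0$ this is the disguised FTC and equals $h(t)$ at once; when $\lambda\in(0,1)$ I would invoke the same inversion formula with $\varrho$ replaced by $\lambda$, obtaining $h(t)-\Omega_\Psi^\lambda(t,a)\,\mathbf{I}_{a^+}^{1-\lambda;\Psi}h(a)$, and then note that the boundary term is zero: since $h$ is continuous, $|\mathbf{I}_{a^+}^{1-\lambda;\Psi}h(t)|\le \|h\|_\infty\,(\Psi(t)-\Psi(a))^{1-\lambda}/\Gamma(2-\lambda)\to 0$ as $t\to a^+$, so $\mathbf{I}_{a^+}^{1-\lambda;\Psi}h(a)=0$. (The same remark shows that, under the hypothesis $h\in C^1$, the correction term in (i) also vanishes, so that part too simplifies to $h(t)$.)

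I expect the only genuine obstacle to be the $\Psi$-Riemann--Liouville inversion formula: the naive route of integrating by parts in $\frac{1}{\Gamma(\varrho)}\int_a^t(\Psi(t)-\Psi(\sigma))^{\varrho-1}G'(\sigma)\,d\sigma$ produces the non-integrable kernel $(\Psi(t)-\Psi(\sigma))^{\varrho-2}$ and must be avoided; the decomposition-and-commutation argument above — which uses only Lemma 2.1 and the disguised FTC, and never an illegitimate interchange of limit and integral — is what I would use in its place. Everything else is exponent bookkeeping together with repeated application of the semigroup property.
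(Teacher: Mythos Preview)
The paper does not prove this theorem; it is quoted as a preliminary result from \cite{Sousa1} and no argument is given. So there is nothing in the paper to compare your proposal against. That said, your route is correct and is essentially the standard one: the semigroup law collapses $\mathbf{I}_{a^+}^{\mu;\Psi}\,{^H\mathbf{D}^{\mu,\nu;\Psi}_{a^+}}h$ to $\mathbf{I}_{a^+}^{\varrho;\Psi}\bigl(\tfrac{1}{\Psi'}\tfrac{d}{dt}\bigr)\mathbf{I}_{a^+}^{1-\varrho;\Psi}h$, and your decomposition $G=G(a^+)+\mathbf{I}_{a^+}^{1;\Psi}\bigl(\tfrac{1}{\Psi'}\tfrac{d}{dt}\bigr)G$ followed by applying $\mathbf{I}_{a^+}^{\varrho;\Psi}$ and then $\tfrac{1}{\Psi'}\tfrac{d}{dt}$ is exactly the clean way to obtain the $\Psi$--Riemann--Liouville inversion without running into the non-integrable kernel. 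The treatment of (ii) via the same inversion with $\lambda=\nu(1-\mu)$ and the vanishing of $\mathbf{I}_{a^+}^{1-\lambda;\Psi}h(a^+)$ for continuous $h$ is also correct.

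One small correction to your closing parenthetical: the claim that the boundary term in (i) vanishes for $h\in C^1$ is only valid when $\varrho<1$. If $\nu=1$ then $\varrho=1$, $(1-\nu)(1-\mu)=0$, $\mathbf{I}_{a^+}^{0;\Psi}h(a)=h(a)$ and $\Omega_\Psi^{1}(t,a)\equiv 1$, so the formula correctly reads $h(t)-h(a)$ (the $\Psi$--Caputo case) and does \emph{not} simplify to $h(t)$. This is a side remark and does not affect your proof of the theorem as stated.
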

Consider the weighted space \cite{Sousa1} defined by
 $$
C_{1-\varrho;\Psi}(\mathcal{I})=\left\{u:(a,b]\to\mathbb{R} : ~(\Psi(t)-\Psi(a))^{1-\varrho}u(t)\in C(\mathcal{I})\right\},0< \varrho\leq 1.
$$
Define the weighted space of piecewise continuous functions as
\begin{align*}
\mathcal{PC}_{1-\varrho; \,  \Psi}(\mathcal{I},\mathbb{R}) =\{& u:(a,b]\to\mathbb{R} :u\in C_{1-\varrho;\Psi}((t_k,t_{k+1}],\mathbb{R}),k=0,1,2,\cdots, m,\\
& \mathbf{I}_{a^+}^{1-\varrho; \, \Psi} \, u(t_k^+), ~ \mathbf{I}_{a^+}^{1-\varrho; \, \Psi}\, u(t_k^-) ~\mbox{exists and}  ~\mathbf{I}_{a^+}^{1-\varrho; \, \Psi}\, u(t_k^-)= \mathbf{I}_{a^+}^{1-\varrho; \, \Psi} \, u(t_k) \\
& ~\mbox{for} ~ \, k = 1,2,\cdots, m \}
\end{align*}
Clearly, $\mathcal{PC}_{1-\varrho; \, \Psi}(\mathcal{I},\mathbb{R})$ is a Banach space with the norm
$$
\|u\|_{\mathcal{PC}_{1-\varrho; \,  \Psi}(\mathcal{I},\mathbb{R})} = \sup_ {t\in \mathcal{I}} \left|(\Psi(t)-\Psi(a))^{1-\varrho}u(t)\right|.
$$
Note that for $\varrho =1$, we get 
$ \mathcal{PC}_{0; \, \Psi}(\mathcal{I},\mathbb{R})=PC(\mathcal{I},\mathbb{R})$ 
a particular case of the space $\mathcal{PC}_{1-\varrho; \,  \Psi}(\mathcal{I},\mathbb{R})$, whose  details are given in \cite{Benchohra,Wang, Bai}.

With suitable modification, the PC-type Arzela--Ascoli Theorem \cite{bainov,wei} can be extended to the weighted space $\mathcal{PC}_{1-\varrho; \,  \Psi}\left( I,\, \mathcal{X}\right)$, where $I$ is closed bounded interval.

\begin{theorem}[$\mathcal{PC}_{1-\varrho; \, \Psi}$ type Arzela-–Ascoli Theorem] \label{pc}

Let $\mathcal{X}$ be a Banach space and $\mathcal{W}_{1-\varrho;\, \Psi} \subset \mathcal{PC}_{1-\varrho; \, \Psi}(\mathcal{J},\mathcal{X}).$ If the following conditions are satisfied:
\begin{itemize} [topsep=0pt,itemsep=-1ex,partopsep=1ex,parsep=1ex]
\item [{\rm(a)}]$\mathcal{W}_{1-\varrho;\, \Psi}$ is uniformly bounded subset of $\mathcal{PC}_{1-\varrho; \, \Psi}(\mathcal{J}, \mathcal{X})$;
\item [{\rm(b)}] $\mathcal{W}_{1-\varrho;\, \Psi}$ is equicontinuous in $(t_k, t_{k+1}), k = 0, 1, 2,\cdots , m, where~ t_0 = a, t_{m+1} = T$ ;
\item [{\rm(c)}]$\mathcal{W}_{1-\varrho;\, \Psi}(t) = \{u(t): u \in \mathcal{W}_{1-\varrho;\, \Psi}, ~t \in \mathcal{J} - {t_1, \cdots , t_m}\}, \mathcal{W}_{1-\varrho;\, \Psi}(t_k^+ ) = \{u(t_k^+ ): u \in  \mathcal{W}_{1-\varrho;\, \Psi}\}~ \text{and}~ \mathcal{W}_{1-\varrho;\, \Psi}(t_k^- ) = \{u(t_k^- ): u \in  \mathcal{W}_{1-\varrho;\, \Psi}\}$ are relatively compact subsets of X, 
\end{itemize}
then $\mathcal{W}_{1-\varrho;\, \Psi}$ is a relatively compact subset of $\mathcal{PC}_{1-\varrho; \, \Psi}(\mathcal{J}, X)$.
\end{theorem}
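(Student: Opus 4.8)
The plan is to reduce the statement to the classical Arzela–Ascoli theorem applied on each subinterval $[t_k,t_{k+1}]$ separately, after transplanting everything onto the space of honest continuous functions by means of the weight $(\Psi(t)-\Psi(a))^{1-\varrho}$. First I would introduce, for each $u\in\mathcal{PC}_{1-\varrho;\Psi}(\mathcal{J},\mathcal{X})$, the transformed function $v(t)=(\Psi(t)-\Psi(a))^{1-\varrho}u(t)$ on each half-open piece $(t_k,t_{k+1}]$; by the definition of the weighted space $v$ extends to an element of $C([t_k,t_{k+1}],\mathcal{X})$, with the one-sided limit at $t_k^+$ supplied by hypothesis. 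Thus the map $u\mapsto v$ identifies $\mathcal{PC}_{1-\varrho;\Psi}(\mathcal{J},\mathcal{X})$ isometrically (by the very definition of the norm) with a closed subspace of the product $\prod_{k=0}^{m} C([t_k,t_{k+1}],\mathcal{X})$ equipped with the max-of-sup-norms; relative compactness in the former is therefore equivalent to relative compactness of the image in the latter, and the latter splits into relative compactness on each factor.

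Next I would check that the three hypotheses (a)–(c), stated for $\mathcal{W}_{1-\varrho;\Psi}$, translate into the three hypotheses of the classical (vector-valued) Arzela–Ascoli theorem for the family $\mathcal{W}^{(k)}=\{v|_{[t_k,t_{k+1}]}: u\in\mathcal{W}_{1-\varrho;\Psi}\}\subset C([t_k,t_{k+1}],\mathcal{X})$ for each fixed $k$. Uniform boundedness of $\mathcal{W}^{(k)}$ is immediate from (a) and the definition of the norm. Equicontinuity of $\mathcal{W}^{(k)}$ on the open interval $(t_k,t_{k+1})$ is hypothesis (b); equicontinuity up to the endpoints then follows because $\Psi\in C^1$ with $\Psi'\neq 0$ makes the weight $(\Psi(t)-\Psi(a))^{1-\varrho}$ uniformly continuous on the compact interval, and the one-sided limits $\mathbf{I}_{a^+}^{1-\varrho;\Psi}u(t_k^+)$, $\mathbf{I}_{a^+}^{1-\varrho;\Psi}u(t_k^-)$ exist; I would spell out that a family of continuous functions on a compact interval that is equicontinuous on the open interior and has uniformly controlled behaviour at the endpoints is equicontinuous on the whole interval. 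Finally, pointwise relative compactness of $\mathcal{W}^{(k)}(t)$ in $\mathcal{X}$ for $t\in(t_k,t_{k+1})$, together with relative compactness of the endpoint sets $\mathcal{W}_{1-\varrho;\Psi}(t_k^+)$ and $\mathcal{W}_{1-\varrho;\Psi}(t_{k+1}^-)$, is hypothesis (c) (after multiplying by the nonzero scalar weight, which preserves relative compactness in $\mathcal{X}$).

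Having verified the classical hypotheses on each factor, I would invoke the standard Arzela–Ascoli theorem to conclude that each $\mathcal{W}^{(k)}$ is relatively compact in $C([t_k,t_{k+1}],\mathcal{X})$, hence the product family is relatively compact in $\prod_{k=0}^m C([t_k,t_{k+1}],\mathcal{X})$, and therefore, pulling back through the isometry, $\mathcal{W}_{1-\varrho;\Psi}$ is relatively compact in $\mathcal{PC}_{1-\varrho;\Psi}(\mathcal{J},\mathcal{X})$. Concretely, given any sequence in $\mathcal{W}_{1-\varrho;\Psi}$ I would extract, by a finite iteration over $k=0,1,\dots,m$, a subsequence whose transforms converge uniformly on each $[t_k,t_{k+1}]$, and check that the piecewise limit again lies in $\mathcal{PC}_{1-\varrho;\Psi}(\mathcal{J},\mathcal{X})$ (the matching condition $\mathbf{I}_{a^+}^{1-\varrho;\Psi}u(t_k^-)=\mathbf{I}_{a^+}^{1-\varrho;\Psi}u(t_k)$ passes to the limit). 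The only genuinely delicate point is the behaviour near the impulse points and the left endpoint $a$: one must make sure the weight $(\Psi(t)-\Psi(a))^{1-\varrho}$, which blows up or vanishes there depending on $\varrho$, is handled by working with the already-weighted functions $v$ throughout rather than with $u$ itself, so that no improper limits intervene; once that reduction is made, the argument is the classical one applied $m+1$ times.
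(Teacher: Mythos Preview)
Your proposal is correct and follows essentially the same approach as the paper: both arguments hinge on the single observation that the map $u\mapsto v$, $v(t)=(\Psi(t)-\Psi(a))^{1-\varrho}u(t)$, is an isometry from $\mathcal{PC}_{1-\varrho;\Psi}(\mathcal{J},\mathcal{X})$ onto the unweighted space $PC(\mathcal{J},\mathcal{X})$, so that relative compactness in the weighted space reduces to the standard $PC$-type Arzel\`a--Ascoli theorem. The only difference is cosmetic: the paper dispatches the unweighted step by citing the existing $PC$-type result (Theorem~2.1 of Wei et al.), whereas you unpack that result by splitting into the $m+1$ subintervals and applying the classical Arzel\`a--Ascoli theorem on each factor with a diagonal extraction; your version is more self-contained, the paper's is shorter.
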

\begin{proof} Let $\mathcal{W}_{1-\varrho; \, \Psi} \subset \mathcal{PC}_{1-\varrho; \,  \Psi}\left( \mathcal{J},\,\mathcal{X}\right) $ satisfy the conditions (a) to (c). Let $\{z_n\}$ be any sequence in $\mathcal{W}_{1-\varrho; \, \Psi}$. Define $x_n(t)= (\Psi(t)-\Psi(a))^{1-\varrho}z_n(t), \forall\,  n$. Then sequence $\{x_n\}\subset \mathcal{W}\subset PC(\mathcal{J},\mathcal{X})$, where $\mathcal{W}$ satisfy the conditions of  Theorem 2.1  of \cite{wei}. Proceeding as in the proof of Theorem 2.1  of \cite{wei},  there exist $x \in PC(\mathcal{J},\mathcal{X})$ such that $x_n \to x$ in $ PC(\mathcal{J},\mathcal{X})$ which in turn gives $z_n \to z$ in $ \mathcal{PC}_{1-\varrho; \,  \Psi}\left( \mathcal{J},\,\mathcal{X}\right)$. This proves $\mathcal{W}_{1-\varrho;\, \Psi}$ is a relatively compact subset of $\mathcal{PC}_{1-\varrho; \, \Psi}(\mathcal{J}, \mathcal{X})$.
\end{proof}

\begin{theorem}[Krasnoselskii, \cite{zhou}]\label{kf}
Let $\mathcal{M}$ be a closed, convex, and nonempty subset of a Banach space $\mathcal{X}$,
and A, B the operators such that
\begin{itemize} [topsep=0pt,itemsep=-1ex,partopsep=1ex,parsep=1ex]
\item[{\rm 1.}]$ \mathcal{A}x + \mathcal{B}y \in \mathcal{M}$ whenever $x, y \in \mathcal{M}$;
\item[{\rm 2.}] $\mathcal{A}$ is compact and continuous;
\item[{\rm 3.}] $\mathcal{B}$ is a contraction mapping.
\end{itemize}
Then there exists $ z \in \mathcal{M} $ such that  $z = \mathcal{A}z + \mathcal{B}z$.
\end{theorem}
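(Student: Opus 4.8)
The plan is to reduce the statement to a combination of the Banach contraction principle and the Schauder fixed point theorem, which is the classical route for results of Krasnoselskii type. First I would fix $y \in \mathcal{M}$ and consider the auxiliary map $\Phi_y : \mathcal{M} \to \mathcal{X}$ defined by $\Phi_y(x) = \mathcal{A}y + \mathcal{B}x$. Hypothesis~(1), applied with the roles of the two arguments interchanged (legitimate since both range over $\mathcal{M}$), guarantees $\Phi_y(\mathcal{M}) \subseteq \mathcal{M}$, and hypothesis~(3), with contraction constant $k \in [0,1)$, makes $\Phi_y$ a contraction on the complete metric space $\mathcal{M}$ (complete because it is a closed subset of the Banach space $\mathcal{X}$). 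Hence the Banach contraction principle yields a unique $\tau(y) \in \mathcal{M}$ with $\tau(y) = \mathcal{A}y + \mathcal{B}\tau(y)$; this defines an operator $\tau : \mathcal{M} \to \mathcal{M}$, and any fixed point of $\tau$ is exactly a solution $z = \mathcal{A}z + \mathcal{B}z$ of the theorem.

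Next I would verify that $\tau$ satisfies the hypotheses of Schauder's theorem on $\mathcal{M}$. For continuity, subtracting the defining identities for $\tau(y_1)$ and $\tau(y_2)$ and using the contraction bound for $\mathcal{B}$ gives $\|\tau(y_1) - \tau(y_2)\| \le \|\mathcal{A}y_1 - \mathcal{A}y_2\| + k\,\|\tau(y_1) - \tau(y_2)\|$, hence $\|\tau(y_1) - \tau(y_2)\| \le (1-k)^{-1}\|\mathcal{A}y_1 - \mathcal{A}y_2\|$; since $\mathcal{A}$ is continuous by hypothesis~(2), so is $\tau$. For compactness, the same inequality shows that whenever $\{\mathcal{A}y_n\}$ is Cauchy so is $\{\tau(y_n)\}$; given any sequence $\{y_n\} \subset \mathcal{M}$, compactness of $\mathcal{A}$ lets me extract a subsequence along which $\{\mathcal{A}y_n\}$ converges, and then $\{\tau(y_n)\}$ converges as well, with limit lying in the closed set $\mathcal{M}$. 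Therefore $\tau(\mathcal{M})$ is relatively compact, i.e., $\tau$ is a compact continuous self-map of the nonempty closed convex set $\mathcal{M}$.

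Finally I would invoke the Schauder fixed point theorem to obtain $z \in \mathcal{M}$ with $\tau(z) = z$, so that $z = \mathcal{A}z + \mathcal{B}z$, which completes the proof. The only genuinely delicate point is the compactness step for $\tau$: one must transfer the compactness of $\mathcal{A}$ through the implicitly defined operator $\tau$, and the device is precisely the Lipschitz estimate $\|\tau(y_1) - \tau(y_2)\| \le (1-k)^{-1}\|\mathcal{A}y_1 - \mathcal{A}y_2\|$ — equivalently, the observation that $(I - \mathcal{B})$ has a Lipschitz inverse, so $\tau(\mathcal{M}) \subseteq (I-\mathcal{B})^{-1}\big(\overline{\mathcal{A}(\mathcal{M})}\big)$ is the continuous image of a compact set. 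Everything else is a routine application of the two named fixed point theorems.
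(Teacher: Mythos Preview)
The paper does not prove this statement at all: Theorem~\ref{kf} is quoted from \cite{zhou} as a standard tool and is immediately followed by Section~3, with no proof given. So there is nothing in the paper to compare your argument against.

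That said, your proposal is correct and is precisely the classical proof of Krasnoselskii's theorem. The reduction---freeze $y$, use Banach's principle to solve $x = \mathcal{A}y + \mathcal{B}x$ for a unique $\tau(y)$, then show $\tau$ is a compact continuous self-map of $\mathcal{M}$ and apply Schauder---is the standard route, and your Lipschitz estimate $\|\tau(y_1)-\tau(y_2)\| \le (1-k)^{-1}\|\mathcal{A}y_1 - \mathcal{A}y_2\|$ is exactly the device that transfers both continuity and relative compactness from $\mathcal{A}$ to $\tau$. The only cosmetic point is that the closing remark about $(I-\mathcal{B})^{-1}$ is heuristic (one would need $\mathcal{B}$ to be a contraction on a suitable superset to speak of a global inverse), but your sequential argument just above it already does the job rigorously, so nothing is missing.
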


\section{Representation formula for the solution}

The following lemma play an important role in building an equivalent fractional integral equation of the impulsive $\Psi$-HFDE \eqref{e11} - \eqref{e13}.
\begin{lemma}
Let $0<\mu<1$ and  $0\leq\nu\leq 1,$ $\varrho=\mu+\nu-\mu\nu$ and $ h: \mathcal{J}\to \mathbb{R} $ be continuous. 

Then for any $b\in \mathcal{J} $ a function $u\in C_{1-\varrho,\Psi}\left( \mathcal{J},\,\mathbb{R}\right) $ defined  by 
\begin{equation}\label{JK}
u(t)= \Omega_{\Psi}^{\varrho}(t,a)\left. \left\{\mathbf{I}_{a^+}^{{1-\varrho}; \, {\Psi}}u(b)-\mathbf{I}_{a^+}^{1-\varrho+\mu; \, \Psi}h(t)\right|_{t=b} \right\}+\mathbf{I}_{a^+}^{\mu; \, \Psi}h(t)
\end{equation}
is the solution of the $\Psi$--Hilfer fractional differential equation $$^H \mathbf{D}^{\mu, \, \nu; \, \Psi}_{a^+}u(t)=h(t),~t \in \mathcal{J}.$$
\end{lemma}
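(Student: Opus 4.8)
The plan is to apply the operator ${}^{H}\mathbf{D}^{\mu,\,\nu;\,\Psi}_{a^+}$ to both sides of \eqref{JK} and check that the right--hand side collapses to $h$. The first observation is that the braced quantity in \eqref{JK}, namely $\mathbf{I}_{a^+}^{1-\varrho;\,\Psi}u(b)-\mathbf{I}_{a^+}^{1-\varrho+\mu;\,\Psi}h(b)$, does not depend on $t$ (here $b\in\mathcal{J}$ is fixed and, since $0<\varrho\le 1$, the order $1-\varrho+\mu>0$ so both integrals are meaningful for continuous $h$); call it $c\in\mathbb{R}$. Then \eqref{JK} reads $u(t)=c\,\Omega_{\Psi}^{\varrho}(t,a)+\mathbf{I}_{a^+}^{\mu;\,\Psi}h(t)$. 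Multiplying by $(\Psi(t)-\Psi(a))^{1-\varrho}$ turns the first summand into the constant $c/\Gamma(\varrho)$, while the second is continuous on $\mathcal{J}$ and tends to $0$ as $t\to a^+$ by the standard mapping property of the $\Psi$--Riemann integral of a continuous function; hence $u\in C_{1-\varrho,\Psi}(\mathcal{J},\mathbb{R})$.

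Next, by linearity of the $\Psi$--Hilfer derivative,
\[
{}^{H}\mathbf{D}^{\mu,\,\nu;\,\Psi}_{a^+}u(t)=c\,{}^{H}\mathbf{D}^{\mu,\,\nu;\,\Psi}_{a^+}\Omega_{\Psi}^{\varrho}(t,a)+{}^{H}\mathbf{D}^{\mu,\,\nu;\,\Psi}_{a^+}\mathbf{I}_{a^+}^{\mu;\,\Psi}h(t).
\]
The second term is dealt with immediately by Theorem \ref{ab}(ii), which gives ${}^{H}\mathbf{D}^{\mu,\,\nu;\,\Psi}_{a^+}\mathbf{I}_{a^+}^{\mu;\,\Psi}h(t)=h(t)$. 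For the first term I would unwind the definition of the $\Psi$--Hilfer derivative. Since $\varrho=\mu+\nu-\mu\nu=\mu+\nu(1-\mu)$ we have the crucial identity $(1-\nu)(1-\mu)=1-\varrho$, so
\[
{}^{H}\mathbf{D}^{\mu,\,\nu;\,\Psi}_{a^+}\Omega_{\Psi}^{\varrho}(t,a)=\frac{1}{\Gamma(\varrho)}\,\mathbf{I}_{a^+}^{\nu(1-\mu);\,\Psi}\left(\frac{1}{\Psi'(t)}\frac{d}{dt}\right)\mathbf{I}_{a^+}^{1-\varrho;\,\Psi}\bigl(\Psi(t)-\Psi(a)\bigr)^{\varrho-1}.
\]
Applying Lemma 2.1(ii) with $\delta=\varrho$ and integration order $1-\varrho$ yields $\mathbf{I}_{a^+}^{1-\varrho;\,\Psi}(\Psi(t)-\Psi(a))^{\varrho-1}=\frac{\Gamma(\varrho)}{\Gamma(1)}(\Psi(t)-\Psi(a))^{0}=\Gamma(\varrho)$, a constant; its $\Psi$--derivative $\frac{1}{\Psi'(t)}\frac{d}{dt}$ vanishes, and $\mathbf{I}_{a^+}^{\nu(1-\mu);\,\Psi}$ applied to $0$ is $0$. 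Hence the first term is zero and ${}^{H}\mathbf{D}^{\mu,\,\nu;\,\Psi}_{a^+}u(t)=h(t)$ on $\mathcal{J}$.

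The only delicate point is the parameter bookkeeping: verifying $1-\varrho=(1-\nu)(1-\mu)$ so that $\mathbf{I}_{a^+}^{(1-\nu)(1-\mu);\,\Psi}$ acting on the weight $\Omega_{\Psi}^{\varrho}$ produces a constant, and confirming that $1-\varrho+\mu$ is exactly the order for which the constant $c$ is well defined. I would also add a short remark that, although Theorem \ref{ab} is stated for $h\in C^{1}$, the identity ${}^{H}\mathbf{D}^{\mu,\,\nu;\,\Psi}_{a^+}\mathbf{I}_{a^+}^{\mu;\,\Psi}h=h$ persists for merely continuous $h$, because $\mathbf{I}_{a^+}^{\mu;\,\Psi}h$ already belongs to the domain of ${}^{H}\mathbf{D}^{\mu,\,\nu;\,\Psi}_{a^+}$ (its $(1-\nu)(1-\mu)$--integral is differentiable). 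Beyond this routine verification I expect no real obstacle.
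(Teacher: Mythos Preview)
Your proof is correct and follows essentially the same approach as the paper: apply ${}^{H}\mathbf{D}^{\mu,\nu;\Psi}_{a^+}$ to both sides, split by linearity, use Theorem~\ref{ab}(ii) on the $\mathbf{I}_{a^+}^{\mu;\Psi}h$ term, and show the $\Omega_{\Psi}^{\varrho}(t,a)$ term is annihilated. The only difference is that the paper quotes the identity ${}^{H}\mathbf{D}^{\mu,\nu;\Psi}_{a^+}(\Psi(t)-\Psi(a))^{\varrho-1}=0$ from \cite{Sousa2} (their equation~\eqref{e32}), whereas you derive it directly from the definition via the observation $(1-\nu)(1-\mu)=1-\varrho$ and Lemma~2.1(ii); your additional checks that $u\in C_{1-\varrho,\Psi}$ and that Theorem~\ref{ab}(ii) applies to continuous $h$ are welcome extra rigor not present in the paper's argument.
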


\begin{proof}
Applying $^H \mathbf{D}^{\mu, \, \nu; \, \Psi}_{a^+} $ on both sides of the equation \eqref{JK}, we get
\begin{align*}
^H \mathbf{D}^{\mu, \, \nu; \, \Psi}_{a^+}u(t)
&= \left. \left\{\mathbf{I}_{a^+}^{{1-\varrho}; \, {\Psi}}u(b)-\mathbf{I}_{a^+}^{1-\varrho+\mu; \, \Psi}h(t)\right|_{t=b} \right\} \,  ^H \mathbf{D}^{\mu, \, \nu; \, \Psi}_{a^+}\Omega_{\Psi}^{\varrho}(t,a)\\
&\qquad + ^H \mathbf{D}^{\mu, \, \nu; \, \Psi}_{a^+} \mathbf{I}_{a^+}^{\mu; \, \Psi}h(t), ~ t\in \mathcal{J}.
\end{align*}

Using the result (\cite{Sousa2}, Page 10),  
\begin{equation} \label{e32}
^H \mathbf{D}^{\mu, \, \nu; \, \Psi}_{a^+}(\Psi(t)-\Psi(a))^{\varrho-1}=0, ~ 0< \varrho <1,
\end{equation}
and using the Theorem \ref{ab}, we get 
$$
^H \mathbf{D}^{\mu, \, \nu; \, \Psi}_{a^+}u(t)=h(t),~t \in \mathcal{J}.$$

This completes the proof of the Lemma.
\end{proof}

In the next result, utilizing the Lemma \ref{JK}, we obtain the equivalent fractional integral of the problem \eqref{e11}-\eqref{e13}.

\begin{lemma}\label{fie}
Let $h: \mathcal{J}\to \mathbb{R}$ be a continuous function.  Then a function $u \in {\mathcal{PC}}_{1-\varrho;\, \Psi}\left( \mathcal{J},\,\mathbb{R}\right)$ is a solution of impulsive $\Psi$--HFDE 
\begin{align}
& ^H \mathbf{D}^{\mu,\, \nu; \, \Psi}_{a^+}u(t)=h(t),~t \in \mathcal{J}-\{t_1, t_2,\cdots ,t_m\},\label{e301}\\
&\Delta \mathbf{I}_{a^+}^{1-\varrho; \, \Psi}u(t_k)= \zeta_k \in \mathbb{R},  ~  ~k = 1,2,3,\cdots,m, \label{e302}\\
& \mathbf{I}_{a^+}^{1-\varrho; \, \Psi}u(a)=\delta \in \mathbb{R}, \label{e303}
\end{align}
if and only if u is a solution of the following fractional integral equation
\begin{equation}\label{e14}
u(t) =
\begin{cases}
\Omega_{\Psi}^{\varrho}(t,a)\, \delta+\mathbf{I}_{a^+}^{\mu; \, \Psi}h(t),~ \text{$t \in [a,t_1], $}\\
 \Omega_{\Psi}^{\varrho}(t,a)\, \left(\delta+\sum_{i=1}^{k}\zeta_i\right)+\mathbf{I}_{a^+}^{\mu; \, \Psi}h(t), ~\text{ $t \in (t_k,t_{k+1}],~ k=1,2,\cdots,m $}.  
\end{cases}
\end{equation}
\end{lemma}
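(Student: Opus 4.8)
The plan is to establish the equivalence interval-by-interval on $[a,T]$ between consecutive impulse points, using the fact that each impulse datum $\zeta_k$ in \eqref{e302} pins down exactly one constant of integration. First I would isolate the computational backbone: if on some subinterval a function has the shape $u(t)=\Omega_{\Psi}^{\varrho}(t,a)\,c+\mathbf{I}^{\mu;\Psi}_{a^{+}}h(t)$ for a constant $c$, then applying $^{H}\mathbf{D}^{\mu,\nu;\Psi}_{a^{+}}$ together with \eqref{e32} (which annihilates the $\Omega_{\Psi}^{\varrho}$ term) and Theorem \ref{ab}(ii) (which gives $^{H}\mathbf{D}^{\mu,\nu;\Psi}_{a^{+}}\mathbf{I}^{\mu;\Psi}_{a^{+}}h=h$) yields $^{H}\mathbf{D}^{\mu,\nu;\Psi}_{a^{+}}u=h$ there; and applying $\mathbf{I}^{1-\varrho;\Psi}_{a^{+}}$ together with the $\Psi$-Riemann--Liouville lemma — part (ii) in the form $\mathbf{I}^{1-\varrho;\Psi}_{a^{+}}\Omega_{\Psi}^{\varrho}(\cdot,a)=1$ and part (i) in the form $\mathbf{I}^{1-\varrho;\Psi}_{a^{+}}\mathbf{I}^{\mu;\Psi}_{a^{+}}h=\mathbf{I}^{1-\varrho+\mu;\Psi}_{a^{+}}h$ — yields $\mathbf{I}^{1-\varrho;\Psi}_{a^{+}}u(t)=c+\mathbf{I}^{1-\varrho+\mu;\Psi}_{a^{+}}h(t)$ there. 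Since $1-\varrho+\mu=1-\nu(1-\mu)>0$ and $\mu>0$, both $\mathbf{I}^{1-\varrho+\mu;\Psi}_{a^{+}}h$ and $\mathbf{I}^{\mu;\Psi}_{a^{+}}h$ are continuous and vanish at $t=a$.

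For necessity, suppose $u\in\mathcal{PC}_{1-\varrho;\Psi}(\mathcal{J},\mathbb{R})$ solves \eqref{e301}--\eqref{e303}. On $[a,t_{1}]$ there is no impulse interference, so by the preceding Lemma (the one built on formula \eqref{JK}), together with the identity obtained by applying $\mathbf{I}^{\mu;\Psi}_{a^{+}}$ to \eqref{e301}, cf.\ Theorem \ref{ab}(i), $u$ has the form $u(t)=\Omega_{\Psi}^{\varrho}(t,a)\,\mathbf{I}^{1-\varrho;\Psi}_{a^{+}}u(a)+\mathbf{I}^{\mu;\Psi}_{a^{+}}h(t)$; the initial condition \eqref{e303} turns this into the first branch of \eqref{e14} with constant $c_{0}=\delta$. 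Inductively, on $(t_{k},t_{k+1}]$ the same Lemma gives $u(t)=\Omega_{\Psi}^{\varrho}(t,a)\,c_{k}+\mathbf{I}^{\mu;\Psi}_{a^{+}}h(t)$ for some $c_{k}\in\mathbb{R}$; by the isolated observation, $\mathbf{I}^{1-\varrho;\Psi}_{a^{+}}u$ equals $c_{k}+\mathbf{I}^{1-\varrho+\mu;\Psi}_{a^{+}}h$ on $(t_{k},t_{k+1}]$ and $c_{k-1}+\mathbf{I}^{1-\varrho+\mu;\Psi}_{a^{+}}h$ on $(t_{k-1},t_{k}]$, so by continuity of $\mathbf{I}^{1-\varrho+\mu;\Psi}_{a^{+}}h$ at $t_{k}$ and by \eqref{e302} we get $\zeta_{k}=\Delta\mathbf{I}^{1-\varrho;\Psi}_{a^{+}}u(t_{k})=c_{k}-c_{k-1}$. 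Solving this recursion with $c_{0}=\delta$ gives $c_{k}=\delta+\sum_{i=1}^{k}\zeta_{i}$, which is the second branch of \eqref{e14}.

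For sufficiency, suppose $u$ is given by \eqref{e14}. On each $(t_{k},t_{k+1}]$ (including $[a,t_{1}]$ as the case $k=0$) the function $u$ has precisely the shape in the isolated observation with $c=c_{k}:=\delta+\sum_{i=1}^{k}\zeta_{i}$; hence $^{H}\mathbf{D}^{\mu,\nu;\Psi}_{a^{+}}u=h$ on $\mathcal{J}$, which is \eqref{e301}. Moreover $\mathbf{I}^{1-\varrho;\Psi}_{a^{+}}u(t)=c_{k}+\mathbf{I}^{1-\varrho+\mu;\Psi}_{a^{+}}h(t)$ on $(t_{k},t_{k+1}]$; letting $t\to a^{+}$ gives $\mathbf{I}^{1-\varrho;\Psi}_{a^{+}}u(a)=c_{0}=\delta$, which is \eqref{e303}, and comparing the branches across $t_{k}$ gives $\Delta\mathbf{I}^{1-\varrho;\Psi}_{a^{+}}u(t_{k})=c_{k}-c_{k-1}=\zeta_{k}$, which is \eqref{e302}.

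The step I expect to be the main obstacle is the one glossed as ``the preceding Lemma gives $u(t)=\Omega_{\Psi}^{\varrho}(t,a)\,c_{k}+\mathbf{I}^{\mu;\Psi}_{a^{+}}h(t)$ on $(t_{k},t_{k+1}]$'' together with the branch-by-branch evaluation of $\mathbf{I}^{1-\varrho;\Psi}_{a^{+}}u$ at the impulse points: the operators $^{H}\mathbf{D}^{\mu,\nu;\Psi}_{a^{+}}$ and $\mathbf{I}^{1-\varrho;\Psi}_{a^{+}}$ are nonlocal (they carry the lower limit $a$), so one must take care that the representation of $u$ on a later subinterval, and the computation of the one-sided limits of $\mathbf{I}^{1-\varrho;\Psi}_{a^{+}}u$ at an impulse, are handled consistently with the piecewise structure encoded in the space $\mathcal{PC}_{1-\varrho;\Psi}(\mathcal{J},\mathbb{R})$ — this is precisely where the distinct constants $c_{k}$ (rather than a single one) enter, hence where the impulses \eqref{e302} actually act. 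Once this bookkeeping is set up, the remainder reduces to the elementary recursion $c_{k}=c_{k-1}+\zeta_{k}$ and the two limit evaluations already noted.
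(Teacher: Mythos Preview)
Your proposal is correct and follows essentially the same approach as the paper: both directions rely on Lemma~\ref{JK} to write $u$ on each subinterval in the form $\Omega_{\Psi}^{\varrho}(t,a)\,c_k+\mathbf{I}_{a^+}^{\mu;\Psi}h(t)$, then use \eqref{e32}, Theorem~\ref{ab}, and the semigroup/power-rule identities for $\mathbf{I}_{a^+}^{1-\varrho;\Psi}$ to verify \eqref{e301} and to compute $\mathbf{I}_{a^+}^{1-\varrho;\Psi}u=c_k+\mathbf{I}_{a^+}^{1-\varrho+\mu;\Psi}h$ on $(t_k,t_{k+1}]$, whence the jump condition \eqref{e302} yields the recursion $c_k-c_{k-1}=\zeta_k$ with $c_0=\delta$. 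The only difference is presentational: you isolate the ``computational backbone'' once and phrase the induction as a clean recursion, whereas the paper carries out the cases $[a,t_1]$, $(t_1,t_2]$, $(t_2,t_3]$ explicitly before invoking ``continuing the above process''.
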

\begin{proof}

Assume that $u \in {\mathcal{PC}}_{1-\varrho;\, \Psi}\left( \mathcal{J},\,\mathbb{R}\right)$ satisfies the  impulsive $\Psi$--HFDE \eqref{e301}-\eqref{e303}.

If $t\in [a,t_1]$ then 
\begin{equation} \label{e3.3}
\begin{cases}
^H \mathbf{D}^{\mu, \, \nu; \, \Psi}_{a^+}u(t)=h(t)\\
\mathbf{I}_{a^+}^{1-\varrho; \, \Psi}u(a)=\delta.  
\end{cases} 
\end{equation} 

Then the problem \eqref{e3.3} is equivalent to the following fractional integral \cite{Sousa2} 
 \begin{equation}\label{e15}
u(t)= \Omega_{\Psi}^{\varrho}(t,a) \, \delta+\mathbf{I}_{a^+}^{\mu; \, \Psi}h(t) ,\, \text{ $t \in [a,t_1] $.}
 \end{equation}

Now, if $t\in (t_1,t_2]$ then
$$
^H \mathbf{D}^{\mu, \, \nu; \, \Psi}_{a^+}u(t)=h(t), \, t\in (t_1,t_2] 
~~\mbox{with}~~\mathbf{I}_{a^+}^{1-\varrho; \, \Psi}u(t_1^+)- \mathbf{I}_{a^+}^{1-\varrho; \, \Psi}u(t_1^-)=\zeta_1.
$$ 

By Lemma \ref{JK}, we have 
\begin{align}\label{e16}
u(t)\nonumber &= \Omega_{\Psi}^{\varrho}(t,a)\left. \left\{\mathbf{I}_{a^+}^{{1-\varrho}; \, {\Psi}}u(t_1^+)-\mathbf{I}_{a^+}^{1-\varrho+\mu; \, \Psi}h(t)\right|_{t=t_1} \right\}+\mathbf{I}_{a^+}^{\mu; \, \Psi}h(t) \nonumber \\ 
&=  \Omega_{\Psi}^{\varrho}(t,a) \left. \left\{\mathbf{I}_{a^+}^{{1-\varrho}; \, {\Psi}}u(t_1^-)+\zeta_1-\mathbf{I}_{a^+}^{1-\varrho+\mu; \, \Psi}h(t)\right|_{t=t_1} \right\}\nonumber \\
& \qquad +\mathbf{I}_{a^+}^{\mu; \, \Psi}h(t), ~ t\in (t_1,t_2].
   \end{align}

Now, from \eqref{e15}, we have 
$$ \mathbf{I}_{a^+}^{{1-\varrho}; \, {\Psi}}u(t)= \delta+\mathbf{I}_{a^+}^{{1-\varrho+\mu}; \, {\Psi}}h(t).$$

This gives
 \begin{equation}\label{e17}
\left. \mathbf{I}_{a^+}^{{1-\varrho}; \, {\Psi}}u(t_1^-)-\mathbf{I}_{a^+}^{1-\varrho+\mu; \, \Psi}h(t)\right|_{t=t_1}=\delta.
\end{equation}

Using \eqref{e17} in \eqref{e16}, we obtain
 \begin{equation}\label{e18}
u(t)=\Omega_{\Psi}^{\varrho}(t,a) (\delta+\zeta_1) +\mathbf{I}_{a^+}^{\mu; \, \Psi}h(t), ~ t\in (t_1,t_2].
\end{equation} 

Next,  if $t\in (t_2,t_3]$ then 
$$
^H \mathbf{D}^{\mu, \, \nu; \, \Psi}_{a^+}u(t)=h(t), \, t\in (t_2,t_3]~~\mbox{with}~~
\mathbf{I}_{a^+}^{1-\varrho; \, \Psi}u(t_2^+)- \mathbf{I}_{a^+}^{1-\varrho; \, \Psi}u(t_2^-)=\zeta_2.
$$ 

Again by Lemma \ref{JK}, we have 
 \begin{align}\label{e19}
 u(t)&= \Omega_{\Psi}^{\varrho}(t,a) \, \left. \left\lbrace \mathbf{I}_{a^+}^{{1-\varrho}; \, {\Psi}}u(t_2^+)-\mathbf{I}_{a^+}^{1-\varrho+\mu; \, \Psi}h(t)\right|_{t=t_2} \right \}+\mathbf{I}_{a^+}^{\mu; \, \Psi}h(t) \nonumber \\ 
 &=  \Omega_{\Psi}^{\varrho}(t,a) \,\left. \left\lbrace \mathbf{I}_{a^+}^{{1-\varrho}; \, {\Psi}}u(t_2^-)+\zeta_2-\mathbf{I}_{a^+}^{1-\varrho+\mu; \, \Psi}h(t)\right|_{t=t_2}\right \} \nonumber \\ 
&\qquad+\mathbf{I}_{a^+}^{\mu; \, \Psi}h(t), \, t\in (t_2,t_3].
\end{align} 

From \eqref{e18}, we have 
$$ \mathbf{I}_{a^+}^{{1-\varrho}; \, {\Psi}}u(t)= (\delta+\zeta_1)+\mathbf{I}_{a^+}^{{1-\varrho+\mu}; \, {\Psi}}h(t),$$
which gives 
 \begin{equation}\label{e20}
\left. \mathbf{I}_{a^+}^{{1-\varrho}; \, {\Psi}}u(t_2^-)-\mathbf{I}_{a^+}^{1-\varrho+\mu; \, \Psi}h(t)\right|_{t=t_2}=\delta+\zeta_1.
 \end{equation}    

Using \eqref{e20} in \eqref{e19}, we get 
\begin{equation}\label{e21}
u(t)=\Omega_{\Psi}^{\varrho}(t,a) \, (\delta+\zeta_1+\zeta_2)+\mathbf{I}_{a^+}^{\mu; \, \Psi}h(t), \, t\in (t_2,t_3].
\end{equation}

Continuing the above process, we obtain 
$$ u(t)=\Omega_{\Psi}^{\varrho}(t,a) \, \left( \delta+\sum_{i=1}^{k}\zeta_i\right)+\mathbf{I}_{a^+}^{\mu; \, \Psi}h(t), ~ \text{ $ t \in (t_k,t_{k+1}], ~k=1,2,\cdots,m$.} 
$$

Conversely, let $u \in {\mathcal{PC}}_{1-\varrho;\, \Psi}\left( \mathcal{J},\,\mathbb{R}\right)$ satisfies the fractional integral equation \eqref{e14}. Then, for $t\in [a,t_1]$, we have
$$ u(t)= \Omega_{\Psi}^{\varrho}(t,a) \, \delta+\mathbf{I}_{a^+}^{\mu; \, \Psi}h(t). $$

Applying the $\Psi$-Hilfer fractional derivative operator $^H D^{\mu, \, \nu; \, \Psi}_{a^+}$ on both sides, we get
\begin{equation}
^H \mathbf{D}^{\mu, \, \nu; \, \Psi}_{a^+}u(t)=\delta \, {^H \mathbf{D}^{\mu, \, \nu; \, \Psi}_{a^+}}\Omega_{\Psi}^{\varrho}(t,a) + {^H \mathbf{D}^{\mu, \, \nu; \, \Psi}_{a^+}} \mathbf{I}_{a^+}^{\mu; \, \Psi}h(t).
\end{equation}

Utilizing \eqref{e32} and Theorem \ref{ab},
\begin{equation*}
^H \mathbf{D}^{\mu, \, \nu; \, \Psi}_{a^+}u(t)=h(t),~t \in [a,t_1].
\end{equation*}

Now, for $t\in (t_k,t_{k+1}],~ (k=1,2,\cdots,m)$, we have
$$u(t)=\Omega_{\Psi}^{\varrho}(t,a) \, \left( \delta+\sum_{i=1}^{k}\zeta_i\right)+\mathbf{I}_{a^+}^{\mu; \, \Psi}h(t),~ \text{ $ t \in (t_k,t_{k+1}], ~k=1,2,\cdots,m$}. $$

Applying the operator  $^H \mathbf{D}^{\mu, \, \nu; \, \Psi}_{a^+}(\cdot)$ on both sides and using \eqref{e32}  and the  Theorem \ref{ab}, we obtain
\begin{align*}
^H \mathbf{D}^{\mu, \, \nu; \, \Psi}_{a^+}u(t)&= \left\{\delta+\sum_{i=1}^{k}\zeta_i\right \}{^H \mathbf{D}^{\mu, \, \nu; \, \Psi}_{a^+}}\Omega_{\Psi}^{\varrho}(t,a) + {^H \mathbf{D}^{\mu, \, \nu; \, \Psi}_{a^+}} \mathbf{I}_{a^+}^{\mu; \, \Psi}h(t)\\
&=h(t).
\end{align*}

We have proved that $u$ satisfies \eqref{e301}. Next, we prove that $u$ also satisfy the  conditions \eqref{e302} and \eqref{e303}. 

Applying the $\Psi$-RL fractional operator
$\mathbf{I}_{a^+}^{{1-\varrho}; \, {\Psi}}(\cdot)$ on both sides of  \eqref{e15}, we get
\begin{align*}
\mathbf{I}_{a^+}^{{1-\varrho}; \, {\Psi}}u(t)&=\delta \mathbf{I}_{a^+}^{{1-\varrho}; \, {\Psi}}\Omega_{\Psi}^{\varrho}(t,a)+ \mathbf{I}_{a^+}^{{1-\varrho}; \, {\Psi}} \mathbf{I}_{a^+}^{\mu; \, \Psi}h(t)\\
&= \delta + \mathbf{I}_{a^+}^{{1-\varrho+\mu}; \, {\Psi}} h(t),
\end{align*}
and from which we obtain
\begin{equation*}\label{e2}
\mathbf{I}_{a^+}^{{1-\varrho}; \, {\Psi}}u(a)=\delta, 
\end{equation*}
which is the condition  \eqref{e303}.

Further, from equation \eqref{e14},  for $ t\in(t_k,t_{k+1}]$,  we have  
\begin{align}\label{e22} 
\mathbf{I}_{a^+}^{{1-\varrho}; \, {\Psi}}u(t)&=\left\{\delta+\sum_{i=1}^{k}\zeta_i\right \}\mathbf{I}_{a^+}^{{1-\varrho}; \, {\Psi}}\Omega_{\Psi}^{\varrho}(t,a)+ \mathbf{I}_{a^+}^{{1-\varrho}; \, {\Psi}} \mathbf{I}_{a^+}^{\mu; \, \Psi}h(t) \nonumber\\
&= \delta+\sum_{i=1}^{k}\zeta_i + \mathbf{I}_{a^+}^{{1-\varrho+\mu}; \, {\Psi}} h(t), 
\end{align}
and for  $ t\in(t_{k-1},t_k]$,  we have
\begin{align} \label{e23}
\mathbf{I}_{a^+}^{{1-\varrho}; \, {\Psi}}u(t)&= \left\{\delta+\sum_{i=1}^{k-1}\zeta_i\right \}\mathbf{I}_{a^+}^{{1-\varrho}; \, {\Psi}}\Omega_{\Psi}^{\varrho}(t,a)+ \mathbf{I}_{a^+}^{{1-\varrho}; \, {\Psi}} \mathbf{I}_{a^+}^{\mu; \, \Psi}h(t) \nonumber\\
&= \delta+\sum_{i=1}^{k-1}\zeta_i + \mathbf{I}_{a^+}^{{1-\varrho+\mu}; \, {\Psi}} h(t),
\end{align}

Therefore, from \eqref{e22} to \eqref{e23}, we obtain
\begin{equation}\label{e24}
\mathbf{I}_{a^+}^{1-\varrho; \, \Psi}u(t_k^+)- \mathbf{I}_{a^+}^{1-\varrho; \, \Psi}u(t_k^-)= \sum_{i=1}^{k}\zeta_i-\sum_{i=1}^{k-1}\zeta_i = \zeta_k
\end{equation}
which condition \eqref{e302}.  We have proved that $u$ satisfies the impulsive $\Psi$--HFDE \eqref{e301}-\eqref{e303}. This completes the proof.
\end{proof}

\section{Existence and Uniqueness results}

\begin{theorem}{\rm\textbf{(Existence)}}\label{ex}
Assume that the function $f:(a,T]\times \mathbb{R} \to \mathbb{R}$ is continuous and satisfies the conditions:
\begin{itemize} [topsep=0pt,itemsep=-1ex,partopsep=1ex,parsep=1ex]
\item[{\rm($A_1$)}] $f(\cdot,u(\cdot))\in {\mathcal{PC}}_{1-\varrho;\, \Psi}\left( \mathcal{J},\,\mathbb{R}\right)$ for any $u\in {\mathcal{PC}}_{1-\varrho;\, \Psi}\left( \mathcal{J},\,\mathbb{R}\right),$
\item[{\rm($A_2$)}] there exist a constant $0<L\leq \dfrac{\Gamma(\mu+\varrho)}{2\Gamma(\varrho)(\Psi(T)-\Psi(a))^\mu}$ satisfying 
$$
|f(t,u)-f(t,v)|\leq L |u-v|, ~ t\in \mathcal{J}, ~u,v \in \mathbb{R}.
$$ 
\end{itemize}
Then, the impulsive $\Psi$--HFDE \eqref{e11}-\eqref{e13} has at least one solution in ${\mathcal{PC}}_{1-\varrho;\, \Psi}\left( \mathcal{J},\,\mathbb{R}\right)$.
\end{theorem}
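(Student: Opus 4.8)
The plan is to recast \eqref{e11}--\eqref{e13} as a fixed point problem via Lemma \ref{fie} and then apply Krasnoselskii's fixed point theorem (Theorem \ref{kf}). Since $(A_1)$ guarantees $f(\cdot,u(\cdot))\in\mathcal{PC}_{1-\varrho;\Psi}(\mathcal{J},\mathbb{R})$ whenever $u\in\mathcal{PC}_{1-\varrho;\Psi}(\mathcal{J},\mathbb{R})$, Lemma \ref{fie} tells us that $u$ solves the impulsive $\Psi$--HFDE if and only if $u$ is a fixed point of $\mathcal{N}:\mathcal{PC}_{1-\varrho;\Psi}(\mathcal{J},\mathbb{R})\to\mathcal{PC}_{1-\varrho;\Psi}(\mathcal{J},\mathbb{R})$ given by
$$
\mathcal{N}u(t)=\Omega_{\Psi}^{\varrho}(t,a)\Bigl(\delta+\sum_{a<t_i<t}\zeta_i\Bigr)+\mathbf{I}_{a^+}^{\mu;\Psi}f(\cdot,u(\cdot))(t).
$$
I would split $\mathcal{N}=\mathcal{A}+\mathcal{B}$ with $\mathcal{A}u(t)=\Omega_{\Psi}^{\varrho}(t,a)\bigl(\delta+\sum_{a<t_i<t}\zeta_i\bigr)$ (the initial-data and impulse part, independent of $u$) and $\mathcal{B}u(t)=\mathbf{I}_{a^+}^{\mu;\Psi}f(\cdot,u(\cdot))(t)$, and then verify the three hypotheses of Theorem \ref{kf} on the closed convex set $\mathcal{M}=\{u\in\mathcal{PC}_{1-\varrho;\Psi}(\mathcal{J},\mathbb{R}):\|u\|\le R\}$ for a radius $R$ to be chosen.

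The central estimate is the weighted bound
$$
\bigl\|\mathbf{I}_{a^+}^{\mu;\Psi}g\bigr\|_{\mathcal{PC}_{1-\varrho;\Psi}}\le\frac{\Gamma(\varrho)\,(\Psi(T)-\Psi(a))^{\mu}}{\Gamma(\mu+\varrho)}\,\|g\|_{\mathcal{PC}_{1-\varrho;\Psi}}\qquad(g\in\mathcal{PC}_{1-\varrho;\Psi}(\mathcal{J},\mathbb{R})),
$$
obtained by inserting $|g(s)|\le\|g\|\,(\Psi(s)-\Psi(a))^{\varrho-1}$ into the integral defining $\mathbf{I}_{a^+}^{\mu;\Psi}$ and evaluating it with the power rule $\mathbf{I}_{a^+}^{\mu;\Psi}(\Psi(\cdot)-\Psi(a))^{\varrho-1}=\tfrac{\Gamma(\varrho)}{\Gamma(\mu+\varrho)}(\Psi(\cdot)-\Psi(a))^{\mu+\varrho-1}$ for the $\Psi$--RL integral. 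Taking $g=f(\cdot,u(\cdot))-f(\cdot,v(\cdot))$ and using $(A_2)$ gives $\|\mathcal{B}u-\mathcal{B}v\|\le L\,\tfrac{\Gamma(\varrho)(\Psi(T)-\Psi(a))^{\mu}}{\Gamma(\mu+\varrho)}\|u-v\|\le\tfrac12\|u-v\|$, so $\mathcal{B}$ is a contraction. For the inclusion $\mathcal{A}x+\mathcal{B}y\in\mathcal{M}$, write $f(t,y)=\bigl(f(t,y)-f(t,0)\bigr)+f(t,0)$ and use $(A_2)$ together with $(A_1)$ applied to $u\equiv 0$ to get $\|f(\cdot,y(\cdot))\|\le LR+\|f(\cdot,0)\|$; the central estimate then yields $\|\mathcal{B}y\|\le\tfrac R2+\tfrac{\Gamma(\varrho)(\Psi(T)-\Psi(a))^{\mu}}{\Gamma(\mu+\varrho)}\|f(\cdot,0)\|$, while $\|\mathcal{A}x\|\le\tfrac1{\Gamma(\varrho)}\bigl(|\delta|+\sum_{i=1}^{m}|\zeta_i|\bigr)$ since $(\Psi(t)-\Psi(a))^{1-\varrho}\Omega_{\Psi}^{\varrho}(t,a)=\tfrac1{\Gamma(\varrho)}$. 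Hence any $R\ge\tfrac2{\Gamma(\varrho)}\bigl(|\delta|+\sum_{i=1}^{m}|\zeta_i|\bigr)+\tfrac{2\Gamma(\varrho)(\Psi(T)-\Psi(a))^{\mu}}{\Gamma(\mu+\varrho)}\|f(\cdot,0)\|$ works; the factor $2$ in the bound imposed on $L$ in $(A_2)$ is exactly what allows the $\tfrac R2$ term to soak up the $R$--dependent growth. Finally, $\mathcal{A}$ is constant, hence continuous, with relatively compact (one-point) range, so it is compact and continuous; a short computation using $\mathbf{I}_{a^+}^{1-\varrho;\Psi}\Omega_{\Psi}^{\varrho}(t,a)=1$ confirms $\mathcal{A}x\in\mathcal{PC}_{1-\varrho;\Psi}$ with the prescribed jumps, and $\mathbf{I}_{a^+}^{1-\varrho;\Psi}\mathcal{B}u=\mathbf{I}_{a^+}^{1-\varrho+\mu;\Psi}f(\cdot,u(\cdot))\in C(\mathcal{J})$ shows $\mathcal{B}u\in\mathcal{PC}_{1-\varrho;\Psi}$.

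Once these three items are in hand, Theorem \ref{kf} produces a fixed point $u\in\mathcal{M}$ of $\mathcal{A}+\mathcal{B}=\mathcal{N}$, and Lemma \ref{fie} identifies $u$ as a solution of \eqref{e11}--\eqref{e13}, which is the assertion. I expect the only real work to be bookkeeping: establishing the weighted $\mathcal{PC}_{1-\varrho;\Psi}$--norm inequality for $\mathbf{I}_{a^+}^{\mu;\Psi}$ and carefully tracking constants so that a consistent radius $R$ can be selected — a step that closes precisely because of the explicit bound $L\le\tfrac{\Gamma(\mu+\varrho)}{2\Gamma(\varrho)(\Psi(T)-\Psi(a))^{\mu}}$ in $(A_2)$. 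An alternative is to let $\mathcal{A}$ be the integral operator and prove its compactness directly from the $\mathcal{PC}_{1-\varrho;\Psi}$--type Arzela--Ascoli theorem (Theorem \ref{pc}) via uniform boundedness and equicontinuity on each subinterval $(t_k,t_{k+1})$; that route is more delicate but equally standard.
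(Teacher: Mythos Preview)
Your proposal is correct and follows the paper's overall scheme (Lemma~\ref{fie} plus Krasnoselskii on the same additive decomposition), but you assign the roles in Theorem~\ref{kf} the other way round: you make the integral operator $\mathcal{B}$ the contraction and the impulse/initial-data operator $\mathcal{A}$ the compact one, whereas the paper declares the constant operator $\mathcal{P}$ to be the contraction (with Lipschitz constant zero) and then proves that the integral operator $\mathcal{Q}$ is compact via the $\mathcal{PC}_{1-\varrho;\Psi}$--type Arzel\`a--Ascoli theorem (Theorem~\ref{pc}), verifying uniform boundedness and equicontinuity on each $(t_k,t_{k+1}]$. Your choice is the more economical one: compactness of a constant operator is immediate, and the contraction estimate for $\mathcal{B}$ is exactly the weighted inequality you isolate, so no Arzel\`a--Ascoli argument is needed at all. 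The paper's route, which you mention as an alternative at the end, trades a trivial contraction check for a genuine compactness verification; it yields the same conclusion but with more work. Your bound on the $f(\cdot,0)$ contribution uses the weighted norm $\|f(\cdot,0)\|_{\mathcal{PC}_{1-\varrho;\Psi}}$ (finite by $(A_1)$ applied to $u\equiv 0$), while the paper uses the sup--norm $\mathcal{M}=\sup_{\sigma}|f(\sigma,0)|$ and integrates directly; both lead to an admissible radius.
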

\begin{proof} In the view of Lemma \ref{fie}, the equivalent fractional integral equation of the impulsive $\Psi$-HFDE \eqref{e11} - \eqref{e13} is given by 
\begin{equation}\label{e41}
u(t) =
\Omega_{\Psi}^{\varrho}(t,a)\left(\delta+\sum_{a <t_k <t}\zeta_k \right)+\mathbf{I}_{a^+}^{\mu; \, \Psi}f(t, u(t)), \text{ $t \in \mathcal{J} $}.
\end{equation} 

Consider the set 
$$ \mathcal{B}_r = \left\{u\in \mathcal{PC}_{1-\varrho; \,  \Psi}\left( \mathcal{J},\,\mathbb{R}\right) : \mathbf{I}_{a^+}^{1-\varrho; \, \Psi}u(a)=\delta , ~\|u\|_{\mathcal{PC}_{1-\varrho; \,  \Psi}\left( \mathcal{J},\,\mathbb{R}\right)} \leq r \right\},
$$ 
where $$
\mathcal{M} = \sup_{\sigma\in \mathcal{J}} |f(\sigma,0)|
$$ 
and 
 $$
 r \geq 2\left(\frac{1}{\Gamma(\varrho)}\left\{|\delta|+\sum_{i=1}^{m}|\zeta_i|\right\} + \frac{\mathcal{M}(\Psi(T)-\Psi(a))^{1-\varrho+\mu}}{\Gamma(\mu+1)} \right).
 $$

We define the operators $\mathcal {P}$ and $\mathcal {Q}$ on $\mathcal{B}_r$ by
\begin{align*}
&\mathcal {P}u(t)= \Omega_{\Psi}^{\varrho}(t,a)\left(\delta+\sum_{a <t_k <t}\zeta_k \right), t\in \mathcal{J},\\
&\mathcal {Q}u(t)= \mathbf{I}_{a^+}^{\mu; \, \Psi}f(t,u(t)) ,\, t \in \mathcal{J}. 
\end{align*}  

Then the fractional integral equation \eqref{e41} can be written as operator equation $$u = \mathcal {P}u+\mathcal {Q}u , \, \, \,  u \in {\mathcal{PC}}_{1-\varrho;\, \Psi}\left( \mathcal{J},\,\mathbb{R}\right).$$
$\mathit{\rm \textbf{Step ~1:}}$ We prove that  $ \mathcal {P}u+\mathcal {Q}v \in \mathcal{B}_r $ for any $u,v \in \mathcal{B}_r.$ \\

Let any $u,v \in \mathcal{B}_r$. Then using ($A_1$), for any $t\in \mathcal{J}$, we have  
\begin{align*} 
&\left|(\Psi(t)-\Psi(a))^{1-\varrho}(\mathcal {P}u(t)+\mathcal {Q}v(t))\right|\\
& =  \left|(\Psi(t)-\Psi(a))^{1-\varrho}\left\{\Omega_{\Psi}^{\varrho}(t,a)\left(\delta+\sum_{a <t_k <t}\zeta_k\right)+\mathbf{I}_{a^+}^{\mu; \, \Psi}f(t,v(t))\right \}\right|\\
&\leq \frac{1}{\Gamma(\varrho)}\left(|\delta|+\sum_{k=1}^{m}|\zeta_k
|\right) + \frac{(\Psi(t)-\Psi(a))^{1-\varrho}}{\Gamma(\mu)} \int_{a}^{t}\mathcal{L}_{\Psi}^{\mu}(t,\sigma)\,\left|f(\sigma,v(\sigma))\right|d\sigma\\
&\leq \frac{1}{\Gamma(\varrho)}\left(|\delta|+\sum_{k=1}^{m}|\zeta_k|\right)+ \frac{(\Psi(t)-\Psi(a))^{1-\varrho}}{\Gamma(\mu)} \int_{a}^{t}\mathcal{L}_{\Psi}^{\mu}(t,\sigma)\, |f(\sigma,v(\sigma))- f(\sigma,0)|d\sigma\\
& \qquad + \frac{(\Psi(t)-\Psi(a))^{1-\varrho}}{\Gamma(\mu)} \int_{a}^{t}\mathcal{L}_{\Psi}^{\mu}(t,\sigma)\,|f(\sigma,0)|d\sigma\\
&\leq \frac{1}{\Gamma(\varrho)}\left(|\delta|+\sum_{k=1}^{m}|\zeta_k|\right)+ \frac{L \, (\Psi(t)-\Psi(a))^{1-\varrho}}{\Gamma(\mu)} \int_{a}^{t}\mathcal{L}_{\Psi}^{\mu}(t,\sigma)|v(\sigma)|d\sigma\\
& \qquad + \frac{\mathcal{M} \, (\Psi(t)-\Psi(a))^{1-\varrho}}{\Gamma(\mu)} \int_{a}^{t}\mathcal{L}_{\Psi}^{\mu}(t,\sigma)\,d\sigma\\
&=\frac{1}{\Gamma(\varrho)}\left(|\delta|+\sum_{k=1}^{m}|\zeta_k|\right)\\
&\qquad+ \frac{L \, (\Psi(t)-\Psi(a))^{1-\varrho}}{\Gamma(\mu)} \int_{a}^{t}\mathcal{L}_{\Psi}^{\mu}(t,\sigma)\,(\Psi(\sigma)-\Psi(a))^{\varrho-1} 
\left|(\Psi(\sigma)-\Psi(a))^{1-\varrho}v(\sigma)\right|d\sigma \\
&\qquad + \frac{\mathcal{M}\, (\Psi(t)-\Psi(a))^{1-\varrho}}{\Gamma(\mu)} \frac{(\Psi(t)-\Psi(a))^{\mu}}{\mu}\\
& \leq \frac{1}{\Gamma(\varrho)}\left(|\delta|+\sum_{k=1}^{m}|\zeta_k|\right)+
L \,(\Psi(t)-\Psi(a))^{1-\varrho} \|v\|_{{\mathcal{PC}}_{1-\varrho;\, \Psi}\left( \mathcal{J},\,\mathbb{R}\right)} ~ \mathbf{I}_{a^+}^{\mu; \, \Psi}(\Psi(t)-\Psi(a))^{\varrho-1}\\
& \qquad + \frac{\mathcal{M}\, (\Psi(t)-\Psi(a))^{1-\varrho+\mu}}{\Gamma(\mu+1)}\\ 
& \leq \frac{1}{\Gamma(\varrho)}\left(|\delta|+\sum_{k=1}^{m}|\zeta_k|\right)+
\frac{L\,\Gamma(\varrho)}{\Gamma(\mu+\varrho)}(\Psi(t)-\Psi(a))^{\mu} \|v\|_{{\mathcal{PC}}_{1-\varrho;\, \Psi}\left( \mathcal{J},\,\mathbb{R}\right)} + \frac{\mathcal{M}\,(\Psi(t)-\Psi(a))^{1-\varrho+\mu}}{\Gamma(\mu+1)}\\
&\leq \frac{1}{\Gamma(\varrho)}\left(|\delta|+\sum_{k=1}^{m}|\zeta_k|\right)+
\frac{L\,\Gamma(\varrho)}{\Gamma(\mu+\varrho)}(\Psi(T)-\Psi(a))^{\mu} r + \frac{\mathcal{M}\,(\Psi(T)-\Psi(a))^{1-\varrho+\mu}}{\Gamma(\mu+1)}.
\end{align*}

Since $$r \geq 2\left(\frac{1}{\Gamma(\varrho)}\left\{|\delta|+\sum_{i=1}^{m}|\zeta_i|\right\} + \frac{\mathcal{M}(\Psi(T)-\Psi(a))^{1-\varrho+\mu}}{\Gamma(\mu+1)} \right)$$ 
and $$L\leq \frac{\Gamma(\mu+\varrho)}{2\Gamma(\varrho)(\Psi(T)-\Psi(a))^\mu},$$ we have
\begin{align*}
\left|(\Psi(t)-\Psi(a))^{1-\varrho}(\mathcal {P}u(t)+\mathcal {Q}v(t))\right| \leq r, ~ t\in \mathcal{J}.
\end{align*}

Therefore
\begin{align*}
 \left\|(\mathcal {P}u+\mathcal {Q}v)\right\|_{{\mathcal{PC}}_{1-\varrho;\, \Psi}\left( \mathcal{J},\,\mathbb{R}\right)} \leq r.
\end{align*}

Further, from definition of the operator $\mathcal{P}$ and  $\mathcal{Q}$, one can verify that
\begin{align*}
 \mathbf{I}_{a^+}^{1-\varrho; \, \Psi}(\mathcal {P}u+\mathcal {Q}v)(a)=\delta.
\end{align*}
We have proved that, $\mathcal {P}u+\mathcal {Q}v \in \mathcal{B}_r.$\\

\noindent $\mathit{\rm \textbf{Step ~2 :~}}$ Clearly $\mathcal {P}$ is a contraction with the contraction constant zero.\\
$\mathit{\rm\textbf{Step ~3 :~}}$ $\mathcal {Q}$ is compact and continuous. \\

The continuity of $\mathcal {Q}$ follows from the continuity of $f$.
Next we prove that $\mathcal {Q}$ is uniformly bounded on $\mathcal{B}_r$. 

Let any $u\in \mathcal{B}_r$. Then by ($A_2$), for any $t \in \mathcal{J}$, we have 
 \begin{align*}
 \left|(\Psi(t)-\Psi(a))^{1-\varrho}\mathcal {Q}u(t)\right|
 & = \left|(\Psi(t)-\Psi(a))^{1-\varrho}\mathbf{I}_{a^+}^{\mu; \, \Psi}f(t,u(t))\right| \\
 &\leq \frac{(\Psi(t)-\Psi(a))^{1-\varrho}}{\Gamma(\mu)} \int_{a}^{t}\mathcal{L}_{\Psi}^{\mu}(t,\sigma)\,|f(\sigma,u(\sigma))|\,d\sigma\\
  &\leq \frac{(\Psi(t)-\Psi(a))^{1-\varrho}}{\Gamma(\mu)} \int_{a}^{t}\mathcal{L}_{\Psi}^{\mu}(t,\sigma)\,|f(\sigma,u(\sigma))- f(\sigma,0)|\,d\sigma\\
  & \qquad + \frac{(\Psi(t)-\Psi(a))^{1-\varrho}}{\Gamma(\mu)} \int_{a}^{t}\mathcal{L}_{\Psi}^{\mu}(t,\sigma)\,|f(\sigma,0)|\,d\sigma\\  
  &\leq \frac{L\,(\Psi(t)-\Psi(a))^{1-\varrho}}{\Gamma(\mu)} \int_{a}^{t}\mathcal{L}_{\Psi}^{\mu}(t,\sigma)\,|u(\sigma)|\,d\sigma\\
    & \qquad + \frac{M\,(\Psi(t)-\Psi(a))^{1-\varrho}}{\Gamma(\mu)} \int_{a}^{t}\mathcal{L}_{\Psi}^{\mu}(t,\sigma)\,d\sigma\\  
  &\leq \frac{L\,\Gamma(\varrho)}{\Gamma(\mu+\varrho)}(\Psi(t)-\Psi(a))^{\mu}\, \|u\|_{{\mathcal{PC}}_{1-\varrho;\, \Psi}\left( \mathcal{J},\,\mathbb{R}\right)}+\frac{\mathcal{M}\,(\Psi(t)-\Psi(a))^{1-\varrho+\mu}}{\Gamma(\mu+1)}\\
  & \leq  \frac{L\, \Gamma(\varrho)}{\Gamma(\mu+\varrho)}(\Psi(T)-\Psi(a))^{\mu}\, r+\frac{\mathcal{M}\, (\Psi(T)-\Psi(a))^{1-\varrho+\mu}}{\Gamma(\mu+1)}.
 \end{align*}

Therefore
 \begin{equation*}
 \left\|\mathcal {Q}u\right\|_{\mathcal{PC}_{1-\varrho; \, \Psi}\left( \mathcal{J},\,\mathbb{R}\right)}
 \leq \frac{L\, \Gamma(\varrho)}{\Gamma(\mu+\varrho)}(\Psi(T)-\Psi(a))^{\mu}\, r+\frac{\mathcal{M}(\Psi(T)-\Psi(a))^{1-\varrho+\mu}}{\Gamma(\mu+1)}.
 \end{equation*}

This proves $\mathcal {Q}$ is uniformly bounded on $\mathcal{B}_r.$ Next, we show that $\mathcal {Q}\mathcal{B}_r$ is equicontinuous.

Let any $ u \in \mathcal{B}_r$ and $t_1, t_2 \in(t_k, t_{k+1}]$  for some $k, (k=0,1,\cdots,m)$ with $t_1 < t_2$. Then,
 \begin{align*}
 &\left|\mathcal {Q}u(t_2)-\mathcal {Q}u(t_1)\right|\\
 &= \left|\left( \left.\mathbf{I}_{a^+}^{\mu; \, \Psi}f(t,u(t))\right|_{t=t_2}\right) -\left( \left.\mathbf{I}_{a^+}^{\mu; \, \Psi}f(t,u(t))\right|_{t=t_1}\right)  \right|\\
& \leq \frac{1}{\Gamma(\mu)} \int_{a}^{t_2}\mathcal{L}_{\Psi}^{\mu}(t_2 ,\sigma)\,|f(\sigma,u(\sigma))|\,d\sigma\\
 & - \frac{1}{\Gamma(\mu)} \int_{a}^{t_1}\mathcal{L}_{\Psi}^{\mu}(t_1 ,\sigma)\,|f(\sigma,u(\sigma))|\,d\sigma\\
 & = \frac{1}{\Gamma(\mu)} \int_{a}^{t_2}\mathcal{L}_{\Psi}^{\mu}(t_2 ,\sigma)\,(\Psi(\sigma)-\Psi(a))^{\varrho-1}\left|(\Psi(\sigma)-\Psi(a))^{1-\varrho}f(\sigma,u(\sigma))\right|\, d\sigma\\
 &- \frac{1}{\Gamma(\mu)} \int_{a}^{t_1}\mathcal{L}_{\Psi}^{\mu}(t_1 ,\sigma)\,(\Psi(\sigma)-\Psi(a))^{\varrho-1}\left|(\Psi(\sigma)-\Psi(a))^{1-\varrho}f(\sigma,u(\sigma))\right|\, d\sigma\\
 &\leq \left\{ \left.\mathbf{I}_{a^+}^{\mu; \, \Psi}(\Psi(t)-\Psi(a))^{\varrho-1}\right|_{t=t_2}-\left.\mathbf{I}_{a^+}^{\mu; \, \Psi}(\Psi(t)-\Psi(a))^{\varrho-1}\right|_{t=t_1} \right \}\times \left\|f\right\|_{{\mathcal{PC}}_{1-\varrho;\, \Psi}\left( \mathcal{J},\,\mathbb{R}\right)}\\
 &= \frac{\Gamma(\varrho)}{\Gamma(\mu+\varrho)}\left\{(\Psi(t_2)-\Psi(a))^{1-\varrho+\mu}-(\Psi(t_1)-\Psi(a))^{1-\varrho+\mu}\right \}\left\|f\right\|_{{\mathcal{PC}}_{1-\varrho;\, \Psi}\left( \mathcal{J},\,\mathbb{R}\right)}.
 \end{align*}
Note that 
$$
\left|\mathcal {Q}u(t_2)-\mathcal {Q}u(t_1)\right| \to 0 \quad \mbox{as} \quad  |t_1-t_2| \to 0. 
$$ 
This shows that $\mathcal {Q}$ is equicontinuous on  $(t_k, t_{k+1}]$. Therefore $\mathcal {Q}$ is relatively compact on $\mathcal{B}_r$. By ${\mathcal{PC}}_{1-\varrho;\, \Psi}$ type Arzela-Ascoli Theorem (Theorem\ref{pc}) $\mathcal {Q}$ is compact on $\mathcal{B}_r$.  Since all the assumptions of Krasnoselskii's fixed point theorem
(Theorem \ref{kf})  are satisfied, the operator equation
$$u = \mathcal {P}u+\mathcal {Q}u $$ has fixed point $\tilde{u} \in {\mathcal{PC}}_{1-\varrho;\, \Psi}\left( \mathcal{J},\,\mathbb{R}\right)$, which is the solution of the impulsive $\Psi$-HFDE \eqref{e11}-\eqref{e13}.
\end{proof}

\begin{theorem}{\rm\textbf{(Uniqueness)}}  \label{unique1}
Assume that the function $f:(a,T]\times \mathbb{R} \to \mathbb{R}$ is continuous and satisfies the conditions $(A_1)-(A_2)$. Then, impulsive $\Psi$--HFDE  \eqref{e11}-\eqref{e13} has a unique solution in the weighted space ${\mathcal{PC}}_{1-\varrho;\, \Psi}\left( \mathcal{J},\,\mathbb{R}\right)$.
\end{theorem}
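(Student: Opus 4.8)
The plan is to recast \eqref{e11}--\eqref{e13}, via Lemma~\ref{fie}, as the fixed point problem $u=\mathcal{P}u+\mathcal{Q}u$ for the operators $\mathcal{P},\mathcal{Q}$ already introduced in the proof of Theorem~\ref{ex}, and then to replace the appeal to Krasnoselskii's theorem by the Banach contraction principle. Since $\mathcal{B}_r$ is a closed, hence complete, subset of the Banach space $\mathcal{PC}_{1-\varrho;\,\Psi}(\mathcal{J},\mathbb{R})$, and Step~1 of the proof of Theorem~\ref{ex} already shows that $\mathcal{P}u+\mathcal{Q}v\in\mathcal{B}_r$ for all $u,v\in\mathcal{B}_r$ (in particular $\mathcal{T}:=\mathcal{P}+\mathcal{Q}$ maps $\mathcal{B}_r$ into itself), the only thing left to check is that $\mathcal{T}$ is a contraction on $\mathcal{B}_r$.

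The decisive point is that $\mathcal{P}u(t)=\Omega_{\Psi}^{\varrho}(t,a)\big(\delta+\sum_{a<t_k<t}\zeta_k\big)$ is independent of $u$, so $\mathcal{T}u-\mathcal{T}v=\mathcal{Q}u-\mathcal{Q}v$ and it suffices to estimate $\|\mathcal{Q}u-\mathcal{Q}v\|$. For $u,v\in\mathcal{B}_r$ and $t\in\mathcal{J}$, proceeding exactly as in Step~3 of Theorem~\ref{ex} and using $(A_2)$,
\begin{align*}
\big|(\Psi(t)-\Psi(a))^{1-\varrho}(\mathcal{Q}u(t)-\mathcal{Q}v(t))\big|
&\le \frac{(\Psi(t)-\Psi(a))^{1-\varrho}}{\Gamma(\mu)}\int_{a}^{t}\mathcal{L}_{\Psi}^{\mu}(t,\sigma)\,|f(\sigma,u(\sigma))-f(\sigma,v(\sigma))|\,d\sigma\\
&\le L\,(\Psi(t)-\Psi(a))^{1-\varrho}\,\|u-v\|_{\mathcal{PC}_{1-\varrho;\,\Psi}(\mathcal{J},\mathbb{R})}\,\mathbf{I}_{a^+}^{\mu;\,\Psi}(\Psi(t)-\Psi(a))^{\varrho-1},
\end{align*}
where I have written $|u(\sigma)-v(\sigma)|=(\Psi(\sigma)-\Psi(a))^{\varrho-1}\big|(\Psi(\sigma)-\Psi(a))^{1-\varrho}(u(\sigma)-v(\sigma))\big|$ inside the integral. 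Invoking the computation $\mathbf{I}_{a^+}^{\mu;\,\Psi}(\Psi(t)-\Psi(a))^{\varrho-1}=\frac{\Gamma(\varrho)}{\Gamma(\mu+\varrho)}(\Psi(t)-\Psi(a))^{\mu+\varrho-1}$ already used in the proof of Theorem~\ref{ex}, taking the supremum over $t\in\mathcal{J}$, and using the bound imposed on $L$ in $(A_2)$,
\begin{align*}
\|\mathcal{T}u-\mathcal{T}v\|_{\mathcal{PC}_{1-\varrho;\,\Psi}(\mathcal{J},\mathbb{R})}
&=\|\mathcal{Q}u-\mathcal{Q}v\|_{\mathcal{PC}_{1-\varrho;\,\Psi}(\mathcal{J},\mathbb{R})}\\
&\le \frac{L\,\Gamma(\varrho)}{\Gamma(\mu+\varrho)}(\Psi(T)-\Psi(a))^{\mu}\,\|u-v\|_{\mathcal{PC}_{1-\varrho;\,\Psi}(\mathcal{J},\mathbb{R})}\le \frac{1}{2}\,\|u-v\|_{\mathcal{PC}_{1-\varrho;\,\Psi}(\mathcal{J},\mathbb{R})}.
\end{align*}

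Hence $\mathcal{T}$ is a contraction on $\mathcal{B}_r$ with constant at most $\tfrac12<1$, and the Banach contraction principle yields a unique fixed point $\tilde{u}\in\mathcal{B}_r$, which by Lemma~\ref{fie} is the unique solution of \eqref{e11}--\eqref{e13} lying in $\mathcal{B}_r$. To conclude uniqueness in all of $\mathcal{PC}_{1-\varrho;\,\Psi}(\mathcal{J},\mathbb{R})$ one observes that every solution $u$ satisfies the integral equation \eqref{e41}, so the estimate above (with $v$ replaced by $0$) gives $\|u\|\le\tfrac12\|u\|+\frac{1}{\Gamma(\varrho)}\big(|\delta|+\sum_i|\zeta_i|\big)+\frac{\mathcal{M}(\Psi(T)-\Psi(a))^{1-\varrho+\mu}}{\Gamma(\mu+1)}$, i.e. $\|u\|\le r$, so every solution already belongs to $\mathcal{B}_r$; alternatively, one may simply run the contraction argument on the closed affine subspace $\{u\in\mathcal{PC}_{1-\varrho;\,\Psi}(\mathcal{J},\mathbb{R}):\mathbf{I}_{a^+}^{1-\varrho;\,\Psi}u(a)=\delta\}$, since the contraction estimate never used the bound $\|u\|\le r$. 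I do not expect a genuine obstacle here: the whole proof is a reuse of the a priori estimate from Theorem~\ref{ex}, the only new ingredient being the numerical fact that $(A_2)$ forces the Lipschitz coefficient $\frac{L\Gamma(\varrho)}{\Gamma(\mu+\varrho)}(\Psi(T)-\Psi(a))^{\mu}$ to be at most $\tfrac12<1$; the one place to stay careful is correctly absorbing the weight $(\Psi(\sigma)-\Psi(a))^{\varrho-1}$ into the $\Psi$-fractional integral so that the power rule applies.
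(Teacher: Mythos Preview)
Your proposal is correct and follows essentially the same route as the paper: define the single operator $\mathcal{T}=\mathcal{P}+\mathcal{Q}$ on $\mathcal{B}_r$, show it is a self-map (you invoke Step~1 of Theorem~\ref{ex}; the paper redoes the computation directly), and then use $(A_2)$ to get the contraction estimate $\|\mathcal{T}u-\mathcal{T}v\|\le\frac{L\Gamma(\varrho)}{\Gamma(\mu+\varrho)}(\Psi(T)-\Psi(a))^{\mu}\|u-v\|\le\tfrac12\|u-v\|$ before applying Banach's principle. Your closing remark that every solution necessarily lies in $\mathcal{B}_r$ (or that the contraction estimate holds on the whole affine subspace) is a point the paper does not make explicit, so in that respect your argument is slightly more complete.
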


\begin{proof}
Consider the set $\mathcal{B}_r$ as defined in the Theorem \ref{ex} and define the operator $\mathcal{T}$ on  $\mathcal{B}_r$ by 
\begin{equation*}
\mathcal{T} u(t) =
\Omega_{\Psi}^{\varrho}(t,a) \left(\delta+\sum_{a <t_k <t}\zeta_k \right)+\mathbf{I}_{a^+}^{\mu; \, \Psi}f(t, u(t)), \text{ $t \in \mathcal{J} $}.  
\end{equation*}

To prove $u=\mathcal{T} u$ has a fixed point,
we show that $\mathcal{T}\mathcal{B}_r\subset \mathcal{B}_r$. For that take any $u\in \mathcal{B}_r$. Then, by ($A_2$) for any $t\in \mathcal{J}$, we have
\begin{align*}
&|\mathcal{T} u(t)|\\
&= \left|\Omega_{\Psi}^{\varrho}(t,a)\left(\delta+\sum_{a <t_k <t}\zeta_k \right)+\mathbf{I}_{a^+}^{\mu; \, \Psi}f(t,u(t))\right|\\
&\leq \Omega_{\Psi}^{\varrho}(t,a) \left(|\delta|+\sum_{k=1}^{m}|\zeta_k|\right) + \frac{1}{\Gamma(\mu)} \int_{a}^{t}\mathcal{L}_{\Psi}^{\mu}(t,\sigma)\,|f(\sigma,u(\sigma))|\,d\sigma\\
&\leq \Omega_{\Psi}^{\varrho}(t,a) \left(|\delta|+\sum_{k=1}^{m}|\zeta_k|\right)+ \frac{1}{\Gamma(\mu)} \int_{a}^{t}\mathcal{L}_{\Psi}^{\mu}(t,\sigma)\,|f(\sigma,u(\sigma))- f(\sigma,0)|\,d\sigma\\
& \qquad + \frac{1}{\Gamma(\mu)} \int_{a}^{t}\mathcal{L}_{\Psi}^{\mu}(t,\sigma)\,|f(\sigma,0)|\,d\sigma\\
&\leq \Omega_{\Psi}^{\varrho}(t,a) \left(|\delta|+\sum_{k=1}^{m}|\zeta_k|\right)+
\frac{L \, \Gamma(\varrho)}{\Gamma(\mu+\varrho)}(\Psi(t)-\Psi(a))^{1-\varrho+\mu} \|u\|_{{\mathcal{PC}}_{1-\varrho;\, \Psi}\left( \mathcal{J},\,\mathbb{R}\right)}\\
& \qquad + \frac{\mathcal{M} \, (\Psi(t)-\Psi(a))^{\mu}}{\Gamma(\mu+1)} \\
&\leq \Omega_{\Psi}^{\varrho}(t,a) \left(|\delta|+\sum_{k=1}^{m}|\zeta_k|\right)+
\frac{L \, \Gamma(\varrho)}{\Gamma(\mu+\varrho)}(\Psi(t)-\Psi(a))^{1-\varrho +\mu} \, r\\
& \qquad +  \frac{\mathcal{M}\, (\Psi(t)-\Psi(a))^{\mu}}{\Gamma(\mu+1)}.
\end{align*}

Thus,
\begin{align*}
|(\Psi(t)-\Psi(a))^{1-\varrho}\, \mathcal{T} u(t)| 
&\leq \frac{1}{\Gamma(\varrho)}\left(|\delta|+\sum_{k=1}^{m}|\zeta_k|\right)+
\frac{L \, \Gamma(\varrho)}{\Gamma(\mu+\varrho)}(\Psi(T)-\Psi(a))^{\mu} \, r\\
& \qquad + \frac{\mathcal{M} \,(\Psi(T)-\Psi(a))^{1-\varrho+\mu}}{\Gamma(\mu+1)},~ t \in \mathcal{J}.
\end{align*} 

From the choices of constants  $r$ and $L$, it can be easily verified that
\begin{align*}
 \left\|\mathcal{T} u\right\|_{{\mathcal{PC}}_{1-\varrho;\, \Psi}\left( \mathcal{J},\,\mathbb{R}\right)} \leq r.
\end{align*} 
This proves $\mathcal{T} \mathcal{B}_r\subset \mathcal{B}_r.$

Now, we prove that the operator $\mathcal{T}$ is a contraction on $\mathcal{B}_r$. Let any $u,v \in \mathcal{B}_r$. Then by assumption ($A_2$) for any $t\in \mathcal{J}$,
\begin{align*}
&\left|(\Psi(t)-\Psi(a))^{1-\varrho}(\mathcal{T}u(t)-\mathcal{T}v(t))\right|\\
& =  \left|(\Psi(t)-\Psi(a))^{1-\varrho} \left( \left\{\Omega_{\Psi}^{\varrho}(t,a)\left(\delta+\sum_{a <t_k <t}\zeta_k \right)+\mathbf{I}_{a^+}^{\mu; \, \Psi}f(t,u(t))\right \}\right.\right.\\
& \qquad-  \left. \left.\left\{\Omega_{\Psi}^{\varrho}(t,a)\left(\delta+\sum_{a <t_k <t}\zeta_k\right)+\mathbf{I}_{a^+}^{\mu; \, \Psi}f(t,v(t))\right\} \right) \right|\\ 
&= \left|(\Psi(t)-\Psi(a))^{1-\varrho}\left(\mathbf{I}_{a^+}^{\mu; \, \Psi}f(t,u(t))-\mathbf{I}_{a^+}^{\mu; \, \Psi}f(t,v(t))\right)\right|\\
&\leq \frac{(\Psi(t)-\Psi(a))^{1-\varrho}}{\Gamma(\mu)}\int_{a}^{t}\mathcal{L}_{\Psi}^{\mu}(t,\sigma)\,
\left|f(\sigma,u(\sigma))- f(\sigma,v(\sigma))\right|\,d\sigma\\
&\leq \frac{L\, (\Psi(t)-\Psi(a))^{1-\varrho}}{\Gamma(\mu)}\int_{a}^{t}\mathcal{L}_{\Psi}^{\mu}(t,\sigma)\,\left|u(\sigma)- v(\sigma)\right|\,d\sigma\\
&\leq \frac{L\, \Gamma(\varrho)}{\Gamma(\mu+\varrho)}(\Psi(t)-\Psi(a))^{\mu} \|u-v\|_{{\mathcal{PC}}_{1-\varrho;\, \Psi}\left( \mathcal{J},\,\mathbb{R}\right)}.
\end{align*}

From the choice of constant $L$, it follows that
\begin{align*}
\|\mathcal{T}u-\mathcal{T}v\|_{{\mathcal{PC}}_{1-\varrho;\, \Psi}\left( \mathcal{J},\,\mathbb{R}\right)}
&\leq \frac{1}{2} \|u-v\|_{{\mathcal{PC}}_{1-\varrho;\, \Psi}\left( \mathcal{J},\,\mathbb{R}\right)}.
\end{align*}

Thus, $\mathcal{T}$ is a contraction and by  the Banach contraction principle it  has a unique fixed point in $ \mathcal{B}_r \subseteq {\mathcal{PC}}_{1-\varrho;\, \Psi}\left( \mathcal{J},\,\mathbb{R}\right)$ which is the unique solution of  impulsive $\Psi$-HFDE \eqref{e11}-\eqref{e13}.
\end{proof}

\section{Nonlocal Impulsive $\Psi$-HFDE }

In this section we examine the existence and uniqueness results for impulsive $\Psi$-HFDE with non local initial conditions given by
\begin{align}
&^H \mathbf{D}^{\mu, \, \nu; \, \Psi}_{a^+}u(t)=f(t, u(t)),~t \in \mathcal{J}-\{t_1, t_2,\cdots ,t_m\},\label{51}\\
&\Delta \mathbf{I}_{a^+}^{1-\varrho; \, \Psi}u(t_k)= \zeta_k \in \mathbb{R},  ~~\,  ~k = 1,2,\cdots,m, \label{52}\\
&\mathbf{I}_{a^+}^{1-\varrho; \, \Psi}u(a)+ g(u)=\delta \in \mathbb{R} , \label{53}  
\end{align} 
where $\mu, \nu, \varrho$ and the function $f$ are as given in the problem 
\eqref{e11}-\eqref{e13} and $g:{\mathcal{PC}}_{1-\varrho;\, \Psi}\left( \mathcal{J},\,\mathbb{R}\right) \to \mathbb{R} $ is a continuous function.

\begin{theorem}$(\mathbf{Existence})$
Assume that the function $f:(a,T]\times \mathbb{R} \to \mathbb{R}$ is continuous and satisfies the conditions $(A_1)-(A_2)$. Further, assume that $g:{\mathcal{PC}}_{1-\varrho;\, \Psi}\left( \mathcal{J},\,\mathbb{R}\right) \to \mathbb{R} $ is a continuous function that satisfy:
\begin{itemize}
\item[($A_3$)] $\left|g(u)-g(v)\right|\leq L_g \|u-v\|_{{\mathcal{PC}}_{1-\varrho;\, \Psi}\left( \mathcal{J},\,\mathbb{R}\right)}, ~ u,v \in {\mathcal{PC}}_{1-\varrho;\, \Psi}\left( \mathcal{J},\,\mathbb{R}\right),$ with $0< L_g \leq \dfrac{1}{6}\, \Gamma(\varrho).$
 \end{itemize}
 
Then, the nonlocal impulsive $\Psi$-HFDE \eqref{51}-\eqref{53} has at least one solution in ${\mathcal{PC}}_{1-\varrho;\, \Psi}\left( \mathcal{J},\,\mathbb{R}\right)$.
\end{theorem}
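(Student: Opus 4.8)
The plan is to follow the architecture of the proof of Theorem~\ref{ex}, isolating the one genuinely new feature, namely the nonlocal functional $g$. First I would record the integral reformulation: arguing exactly as in Lemma~\ref{fie}, but with the datum $\delta$ replaced by $\delta-g(u)$ (since \eqref{53} reads $\mathbf{I}_{a^+}^{1-\varrho;\Psi}u(a)=\delta-g(u)$), a function $u\in\mathcal{PC}_{1-\varrho;\Psi}(\mathcal{J},\mathbb{R})$ solves \eqref{51}--\eqref{53} if and only if
\begin{equation*}
u(t)=\Omega_{\Psi}^{\varrho}(t,a)\left(\delta-g(u)+\sum_{a<t_k<t}\zeta_k\right)+\mathbf{I}_{a^+}^{\mu;\Psi}f(t,u(t)),\qquad t\in\mathcal{J}.
\end{equation*}
I would then split the right-hand side as $\mathcal{P}u(t)=\Omega_{\Psi}^{\varrho}(t,a)\bigl(\delta-g(u)+\sum_{a<t_k<t}\zeta_k\bigr)$ and $\mathcal{Q}u(t)=\mathbf{I}_{a^+}^{\mu;\Psi}f(t,u(t))$, so that solutions of \eqref{51}--\eqref{53} are precisely the fixed points of $\mathcal{P}+\mathcal{Q}$, and apply Krasnoselskii's theorem (Theorem~\ref{kf}) on a closed convex nonempty ball $\mathcal{B}_r=\{u\in\mathcal{PC}_{1-\varrho;\Psi}(\mathcal{J},\mathbb{R}):\|u\|_{\mathcal{PC}_{1-\varrho;\Psi}(\mathcal{J},\mathbb{R})}\le r\}$, taken now without the side constraint $\mathbf{I}_{a^+}^{1-\varrho;\Psi}u(a)=\delta$ used in Theorem~\ref{ex} (that constraint is incompatible with \eqref{53}; the nonlocal condition is recovered automatically by applying $\mathbf{I}_{a^+}^{1-\varrho;\Psi}$ to the integral equation and evaluating at $t=a$).

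For the self-map condition $\mathcal{P}u+\mathcal{Q}v\in\mathcal{B}_r$, the only new ingredient over Step~1 of Theorem~\ref{ex} is the bound $|g(u)|\le|g(0)|+L_g\|u\|_{\mathcal{PC}_{1-\varrho;\Psi}(\mathcal{J},\mathbb{R})}\le|g(0)|+L_g r$ coming from $(A_3)$. Reusing the computation already carried out in Step~1 of Theorem~\ref{ex} verbatim, this yields, for every $t\in\mathcal{J}$,
\begin{equation*}
\left|(\Psi(t)-\Psi(a))^{1-\varrho}(\mathcal{P}u(t)+\mathcal{Q}v(t))\right|\le\frac{|\delta|+|g(0)|+\sum_{k=1}^{m}|\zeta_k|}{\Gamma(\varrho)}+\frac{L_g}{\Gamma(\varrho)}\,r+\frac{L\Gamma(\varrho)}{\Gamma(\mu+\varrho)}(\Psi(T)-\Psi(a))^{\mu}\,r+\frac{\mathcal{M}(\Psi(T)-\Psi(a))^{1-\varrho+\mu}}{\Gamma(\mu+1)}.
\end{equation*}
Using $L_g/\Gamma(\varrho)\le\tfrac16$ from $(A_3)$, $\tfrac{L\Gamma(\varrho)}{\Gamma(\mu+\varrho)}(\Psi(T)-\Psi(a))^{\mu}\le\tfrac12$ from $(A_2)$, and choosing $r\ge 3\bigl(\tfrac{1}{\Gamma(\varrho)}(|\delta|+|g(0)|+\sum_{i=1}^{m}|\zeta_i|)+\tfrac{\mathcal{M}(\Psi(T)-\Psi(a))^{1-\varrho+\mu}}{\Gamma(\mu+1)}\bigr)$, the right-hand side is bounded by $\tfrac{r}{3}+\tfrac{r}{6}+\tfrac{r}{2}=r$, so $\|\mathcal{P}u+\mathcal{Q}v\|_{\mathcal{PC}_{1-\varrho;\Psi}(\mathcal{J},\mathbb{R})}\le r$.

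Next, $\mathcal{P}$ is a contraction on $\mathcal{B}_r$: since $\Omega_{\Psi}^{\varrho}(t,a)(\Psi(t)-\Psi(a))^{1-\varrho}=1/\Gamma(\varrho)$ and the $\zeta_k$-terms cancel,
\begin{equation*}
\left\|\mathcal{P}u-\mathcal{P}v\right\|_{\mathcal{PC}_{1-\varrho;\Psi}(\mathcal{J},\mathbb{R})}=\frac{|g(u)-g(v)|}{\Gamma(\varrho)}\le\frac{L_g}{\Gamma(\varrho)}\|u-v\|_{\mathcal{PC}_{1-\varrho;\Psi}(\mathcal{J},\mathbb{R})}\le\frac16\|u-v\|_{\mathcal{PC}_{1-\varrho;\Psi}(\mathcal{J},\mathbb{R})},
\end{equation*}
so $\mathcal{P}$ has contraction constant $L_g/\Gamma(\varrho)<1$. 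The proof that $\mathcal{Q}$ is compact and continuous carries over word for word from Step~3 of Theorem~\ref{ex}: continuity of $\mathcal{Q}$ from that of $f$ together with $(A_1)$; uniform boundedness of $\mathcal{Q}\mathcal{B}_r$ from $(A_2)$; equicontinuity on each $(t_k,t_{k+1}]$ from the same estimate $|\mathcal{Q}u(t_2)-\mathcal{Q}u(t_1)|\le\frac{\Gamma(\varrho)}{\Gamma(\mu+\varrho)}\bigl((\Psi(t_2)-\Psi(a))^{1-\varrho+\mu}-(\Psi(t_1)-\Psi(a))^{1-\varrho+\mu}\bigr)\|f\|_{\mathcal{PC}_{1-\varrho;\Psi}(\mathcal{J},\mathbb{R})}$; and relative compactness via the $\mathcal{PC}_{1-\varrho;\Psi}$-type Arzela--Ascoli theorem (Theorem~\ref{pc}). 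All hypotheses of Krasnoselskii's theorem being verified, there is $\tilde u\in\mathcal{B}_r$ with $\tilde u=\mathcal{P}\tilde u+\mathcal{Q}\tilde u$, which is the sought solution of \eqref{51}--\eqref{53}.

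I expect the main obstacle, beyond bookkeeping, to be the re-balancing of the constants. Unlike in Theorem~\ref{ex}, where $\mathcal{P}$ was a contraction with constant zero and contributed nothing to the growth estimate, here $\mathcal{P}$ carries a nonzero Lipschitz constant $L_g/\Gamma(\varrho)$ and, through the terms $|g(0)|/\Gamma(\varrho)$ and $(L_g/\Gamma(\varrho))r$, perturbs the self-map estimate; one must check that the three smallness prescriptions ($L\le\Gamma(\mu+\varrho)/(2\Gamma(\varrho)(\Psi(T)-\Psi(a))^{\mu})$ from $(A_2)$, $L_g\le\Gamma(\varrho)/6$ from $(A_3)$, and the enlarged lower bound on $r$) are mutually compatible and leave the total at most $r$. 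The particular constant $1/6$ in $(A_3)$ is precisely what makes the $\tfrac16+\tfrac12+\tfrac13$ decomposition close; everything else is a routine transcription of the arguments of Theorems~\ref{ex} and~\ref{unique1}.
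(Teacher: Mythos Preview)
Your proposal is correct and follows essentially the same route as the paper: the same integral reformulation via Lemma~\ref{fie}, the same splitting into a $g$-carrying operator and the fractional integral operator, the same choice of radius $r\ge 3(\cdots)$, the same $\tfrac13+\tfrac16+\tfrac12$ balance of constants, and the same appeal to Krasnoselskii together with the compactness argument recycled from Theorem~\ref{ex}. The one deliberate deviation is that you work on the plain norm ball $\{\|u\|\le r\}$, whereas the paper keeps the nonlocal side constraint $\mathbf{I}_{a^+}^{1-\varrho;\Psi}u(a)+g(u)=\delta$ in the definition of $\mathcal{B}_{r^*}$; your choice is harmless (indeed cleaner, since the constrained set need not be convex when $g$ is nonlinear) and the nonlocal condition is, as you note, recovered a posteriori from the fixed-point equation.
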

\begin{proof} By applying the Lemma \ref{fie},  the equivalent fractional integral equation of the nonlocal impulsive $\Psi$-HFDE \eqref{51}-\eqref{53} is given as follows
\begin{equation}\label{ie}
 u(t) =\Omega_{\Psi}^{\varrho}(t,a)\,\left(\delta-g(u)+\sum_{a<t_k<t}\zeta_k\right)+\mathbf{I}_{a^+}^{\mu; \, \Psi}f(t,u(t)), ~t \in \mathcal{J}.
\end{equation}

Consider the set 
$$
 \mathcal{B}_{r^{*}} = \left\{u\in \mathcal{PC}_{1-\varrho;\, \Psi}\left( \mathcal {J},\,\mathbb{R}\right) : \mathbf{I}_{a^+}^{1-\varrho; \, \Psi}u(a)+g(u)=\delta, ~\|u\|_{{\mathcal{PC}}_{1-\varrho;\, \Psi}\left( \mathcal{J},\,\mathbb{R}\right)} \leq {r^{*}} \right\},
 $$ 
where
$$
{r^{*}} \geq 3
\left(\frac{1}{\Gamma(\varrho)}\left\{|\delta|+G+\sum_{k=1}^{m}|\zeta_k|\right\} + \frac{\mathcal{M}}{\Gamma(\mu+1)}\,(\Psi(T)-\Psi(a))^{1-\varrho+\mu} \right),
$$
$G= |g(0)|$ and 
$ \mathcal{M} = \sup_{\sigma\in \mathcal{J}} |f(\sigma,0)|$.

Define operator $\mathcal{R}$ and  $\mathcal{Q} ^*$ on $\mathcal{B}_{r^{*}}$ by
\begin{align*}
&\mathcal{R}u(t)=\Omega_{\Psi}^{\varrho}(t,a)\,\left(\delta-g(u)+\sum_{a<t_k<t}\zeta_k\right) ,\, t \in \mathcal{J},\\
&\mathcal{Q}^*u (t) =  \mathbf{I}_{a^+}^{\mu; \, \Psi}f(t,u(t)) ,\, t \in \mathcal{J}.
\end{align*} 

Then the fractional integral equation \eqref{ie} is equivalent to the  operator equation 
\begin{align} \label{ope}
u=\mathcal{R}u + \mathcal{Q}^*u,\,  u\in {\mathcal{PC}}_{1-\varrho;\, \Psi}\left( \mathcal{J},\,\mathbb{R}\right). 
\end{align}

We apply the  Krasnoselskii's fixed point theorem
(Theorem \ref{kf})  to prove that the operator equation \eqref{ope} has fixed point. Firstly, we show that $\mathcal{R}u+\mathcal{Q}^*v \in \mathcal{B}_{r^{*}}$ for any $u,v\in \mathcal{B}_{r^{*}}$. By assumption ($A_2$) and ($A_3$), for any $u,v \in \mathcal{B}_{r^{*}}$ and $t\in \mathcal{J}$, 
\begin{align*}
&\left|(\Psi(t)-\Psi(a))^{1-\varrho}(\mathcal {R}u(t)+\mathcal {Q}^*v(t))\right|\\
& =  \left|(\Psi(t)-\Psi(a))^{1-\varrho}\left\{\Omega_{\Psi}^{\varrho}(t,a)\,\left(\delta- g(u)+\sum_{a<t_k<t}\zeta_k\right)+\mathbf{I}_{a^+}^{\mu; \, \Psi}f(t,v(t))\right \}\right|\\
&\leq \frac{1}{\Gamma(\varrho)}\left(|\delta|+|g(u)|+\sum_{k=1}^{m}|\zeta_k|\right) + \frac{(\Psi(t)-\Psi(a))^{1-\varrho}}{\Gamma(\mu)} \int_{a}^{t}\mathcal{L}_{\Psi}^{\mu}(t,\sigma)\,|f(\sigma,v(\sigma))|\,d\sigma\\
&\leq \frac{1}{\Gamma(\varrho)}\left(|\delta|+|g(u)-g(0)|+|g(0)|+\sum_{k=1}^{m}|\zeta_k|\right)\\
&\qquad + \frac{(\Psi(t)-\Psi(a))^{1-\varrho}}{\Gamma(\mu)} \int_{a}^{t}\mathcal{L}_{\Psi}^{\mu}(t,\sigma)\,|f(\sigma,v(\sigma))- f(\sigma,0)|\,d\sigma\\
&\qquad  + \frac{(\Psi(t)-\Psi(a))^{1-\varrho}}{\Gamma(\mu)} \int_{a}^{t}\mathcal{L}_{\Psi}^{\mu}(t,\sigma)\,|f(\sigma,0)|\,d\sigma\\
&\leq \frac{1}{\Gamma(\varrho)}\left(|\delta|+L_g \|u\|_{{\mathcal{PC}}_{1-\varrho;\, \Psi}\left( \mathcal{J},\,\mathbb{R}\right)}+ G +\sum_{k=1}^{m}|\zeta_k|\right) \\
&\qquad + \frac{L \,\Gamma(\varrho)}{\Gamma(\mu+\varrho)}(\Psi(t)-\Psi(a))^{\mu} \|v\|_{{\mathcal{PC}}_{1-\varrho;\, \Psi}\left( \mathcal{J},\,\mathbb{R}\right)}+ \frac{\mathcal{M} \,(\Psi(t)-\Psi(a))^{1-\varrho+\mu}}{\Gamma(\mu+1)}\\
&\leq \frac{1}{\Gamma(\varrho)}\left(|\delta|+ G +\sum_{k=1}^{m}|\zeta_k|\right) + \frac{L_g}{\Gamma(\varrho)}r^{*}\\
&\qquad +\frac{L\Gamma(\varrho)}{\Gamma(\mu+\varrho)}(\Psi(T)-\Psi(a))^{\mu}r^{*}+\frac{\mathcal{M}(\Psi(T)-\Psi(a))^{1-\varrho+\mu}}{\Gamma(\mu+1)}.
\end{align*}

From the choice of $r^{*}$, $L$ and $L_g$, from the above inequality, we obtain 
\begin{align*}
 \left\|(\mathcal {R}u+\mathcal {Q}^*v)\right\|_{{\mathcal{PC}}_{1-\varrho;\, \Psi}\left( \mathcal{J},\,\mathbb{R}\right)} \leq r^{*}.
\end{align*}

Further, one can  verify that
\begin{align*}
 \mathbf{I}_{a^+}^{1-\varrho; \, \Psi}(\mathcal {R}u+\mathcal {Q}^*v)(a) + g(u)=\delta .
\end{align*}
This shows that $\mathcal{R}u+\mathcal{Q}^*v \in \mathcal{B}_{r^{*}}.$

Next, we prove that $\mathcal{R}$ is a contraction mapping. Let any $u,v \in \mathcal{B}_{r^{*}}$ and $t \in \mathcal{J}$.

Consider
\begin{align*}
&\left|(\Psi(t)-\Psi(a))^{1-\varrho}(\mathcal {R}u(t)-\mathcal {R}v(t))\right|\\
&=\left|(\Psi(t)-\Psi(a))^{1-\varrho}\left\{\Omega_{\Psi}^{\varrho}(t,a)\,\left(\delta- g(u)+\sum_{a<t_k<t}\zeta_k\right)\right.\right.\\
&\qquad \left.\left. -\, \Omega_{\Psi}^{\varrho}(t,a)\,\left(\delta-g(v)+\sum_{a<t_k<t}\zeta_k\right)\right \} \right|\\
&=\frac{1}{\Gamma(\varrho)}\left| g(u)-g(v)\right|\\
&\leq \frac{L_g}{\Gamma(\varrho)}\,  \|u-v\|_{{\mathcal{PC}}_{1-\varrho;\, \Psi}\left( \mathcal{J},\,\mathbb{R}\right)}.
\end{align*}

From the choice of $L_g$, we obtain
\begin{align*}
\|Ru-Rv\|_{{\mathcal{PC}}_{1-\varrho;\, \Psi}\left( \mathcal{J},\,\mathbb{R}\right)}
\leq \frac{1}{6} \, \|u-v\|_{{\mathcal{PC}}_{1-\varrho;\, \Psi}\left( \mathcal{J},\,\mathbb{R}\right)}.
\end{align*}

This shows that $\mathcal {R}$ is a contraction. The operator $\mathcal {Q}^*$  is compact and continuous as proved in the Theorem \ref{ex}. Hence by ${\mathcal{PC}}_{1-\varrho;\, \Psi}\left( \mathcal{J},\,\mathbb{R}\right)$-type Arzela-Ascoli Theorem \ref{pc}  $\mathcal {Q}^*$ is compact on $\mathcal{B}_{r^{*}}$. Further, as discussed in the 
 proof of  Theorem \ref{ex} the non local impulsive $\Psi$-HFDE \eqref{51}-\eqref{53} has at least one solution in ${\mathcal{PC}}_{1-\varrho;\, \Psi}\left( \mathcal{J},\,\mathbb{R}\right)$.
\end{proof}

\begin{theorem}\textbf{(Uniqueness)} \label{unique2}
Assume that the function $f:(a,T]\times \mathbb{R} \to \mathbb{R}$ is continuous and satisfies the conditions $(A_1)-(A_3)$. Then, non local impulsive $\Psi$--HFDE  \eqref{51}-\eqref{53} has a unique solution in the weighted space ${\mathcal{PC}}_{1-\varrho;\, \Psi}\left( \mathcal{J},\,\mathbb{R}\right)$.
\end{theorem}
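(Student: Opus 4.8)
The plan is to mirror the proof of Theorem \ref{unique1}, inserting the nonlocal term exactly where it entered the existence proof. First I would invoke Lemma \ref{fie} (applied with the initial datum $\delta$ replaced by $\delta - g(u)$) to reduce \eqref{51}--\eqref{53} to the fixed-point equation $u = \mathcal{T}^{*}u$ on the set $\mathcal{B}_{r^{*}}$ introduced in the preceding existence theorem, where
\begin{equation*}
\mathcal{T}^{*}u(t)=\Omega_{\Psi}^{\varrho}(t,a)\left(\delta-g(u)+\sum_{a<t_k<t}\zeta_k\right)+\mathbf{I}_{a^+}^{\mu;\,\Psi}f(t,u(t)),\qquad t\in\mathcal{J}.
\end{equation*}
Since $\mathcal{T}^{*}=\mathcal{R}+\mathcal{Q}^{*}$ in the notation of that proof, the inclusion $\mathcal{T}^{*}\mathcal{B}_{r^{*}}\subset\mathcal{B}_{r^{*}}$ is nothing but the norm estimate already carried out there (the special case $u=v$ of the bound on $\mathcal{R}u+\mathcal{Q}^{*}v$), together with the verified identity $\mathbf{I}_{a^+}^{1-\varrho;\,\Psi}(\mathcal{T}^{*}u)(a)+g(u)=\delta$; so I would simply quote it rather than redo it.

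The substantive step is to show $\mathcal{T}^{*}$ is a contraction. For $u,v\in\mathcal{B}_{r^{*}}$ and $t\in\mathcal{J}$ the constant and impulsive pieces cancel in $\mathcal{T}^{*}u(t)-\mathcal{T}^{*}v(t)$, leaving only a $g$-contribution and a fractional-integral contribution:
\begin{align*}
\left|(\Psi(t)-\Psi(a))^{1-\varrho}(\mathcal{T}^{*}u(t)-\mathcal{T}^{*}v(t))\right|
&\le \frac{1}{\Gamma(\varrho)}\left|g(u)-g(v)\right| \\
&\quad + (\Psi(t)-\Psi(a))^{1-\varrho}\left|\mathbf{I}_{a^+}^{\mu;\,\Psi}f(t,u(t))-\mathbf{I}_{a^+}^{\mu;\,\Psi}f(t,v(t))\right|.
\end{align*}
By $(A_3)$ the first term is at most $\frac{L_g}{\Gamma(\varrho)}\|u-v\|_{{\mathcal{PC}}_{1-\varrho;\,\Psi}}$, and by the computation already done in Theorem \ref{unique1} — using $(A_2)$, the definition of $\mathbf{I}_{a^+}^{\mu;\,\Psi}$, the power-rule identity $\mathbf{I}_{a^+}^{\mu;\,\Psi}(\Psi(\cdot)-\Psi(a))^{\varrho-1}=\frac{\Gamma(\varrho)}{\Gamma(\mu+\varrho)}(\Psi(\cdot)-\Psi(a))^{\mu+\varrho-1}$, and the weighted norm — the second term is at most $\frac{L\,\Gamma(\varrho)}{\Gamma(\mu+\varrho)}(\Psi(T)-\Psi(a))^{\mu}\|u-v\|_{{\mathcal{PC}}_{1-\varrho;\,\Psi}}$. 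Taking the supremum over $\mathcal{J}$ I would get
\begin{equation*}
\|\mathcal{T}^{*}u-\mathcal{T}^{*}v\|_{{\mathcal{PC}}_{1-\varrho;\,\Psi}}\le\left(\frac{L_g}{\Gamma(\varrho)}+\frac{L\,\Gamma(\varrho)}{\Gamma(\mu+\varrho)}(\Psi(T)-\Psi(a))^{\mu}\right)\|u-v\|_{{\mathcal{PC}}_{1-\varrho;\,\Psi}},
\end{equation*}
and then the hypothesis $L_g\le\frac16\Gamma(\varrho)$ forces $\frac{L_g}{\Gamma(\varrho)}\le\frac16$ while the hypothesis $L\le\frac{\Gamma(\mu+\varrho)}{2\Gamma(\varrho)(\Psi(T)-\Psi(a))^{\mu}}$ forces the second summand to be $\le\frac12$, so the overall constant is $\le\frac23<1$.

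Finally I would conclude: $\mathcal{B}_{r^{*}}$ is a nonempty closed (hence complete) subset of the Banach space ${\mathcal{PC}}_{1-\varrho;\,\Psi}(\mathcal{J},\mathbb{R})$ which is invariant under the contraction $\mathcal{T}^{*}$, so the Banach contraction principle gives a unique fixed point in $\mathcal{B}_{r^{*}}$, i.e.\ a unique solution of \eqref{51}--\eqref{53}; and since the displayed contraction inequality in fact holds for \emph{arbitrary} $u,v\in{\mathcal{PC}}_{1-\varrho;\,\Psi}(\mathcal{J},\mathbb{R})$ (it only uses the Lipschitz bounds $(A_2)$, $(A_3)$), any two solutions $u,v$ of \eqref{51}--\eqref{53} satisfy $\|u-v\|\le\frac23\|u-v\|$, giving uniqueness throughout the whole weighted space. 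I do not expect a genuine obstacle here; the one place demanding attention is the bookkeeping of the three Lipschitz/size constants so that the two contraction contributions sum below one — which is precisely what the factor $\frac16$ in $(A_3)$, the factor $\frac12$ implicit in the bound on $L$, and the factor $3$ in the definition of $r^{*}$ are calibrated to guarantee — plus the routine check that $\mathcal{B}_{r^{*}}$ is closed (from continuity of $g$ and of $u\mapsto\mathbf{I}_{a^+}^{1-\varrho;\,\Psi}u(a)$ on the weighted space).
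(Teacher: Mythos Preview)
Your proposal is correct and follows exactly the route the paper indicates: the paper's own proof of Theorem \ref{unique2} is the single sentence ``The proof can be completed following the same steps as in the proof the Theorem \ref{unique1},'' and what you have written is precisely that --- define $\mathcal{T}^{*}$ on $\mathcal{B}_{r^{*}}$, quote the self-map estimate from the existence theorem, and verify the contraction constant $\tfrac{L_g}{\Gamma(\varrho)}+\tfrac{L\Gamma(\varrho)}{\Gamma(\mu+\varrho)}(\Psi(T)-\Psi(a))^{\mu}\le\tfrac16+\tfrac12<1$. Your closing remark extending uniqueness from $\mathcal{B}_{r^{*}}$ to the whole weighted space is a detail the paper omits but is a welcome addition.
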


\begin{proof}
The proof can be completed following the same steps as in the proof the Theorem \ref{unique1}. 
\end{proof} 
\section{Applications} 
By  taking 
$$
\Psi(t)=t ~\mbox{and} ~~\nu \to 1.
$$
the impulsive $\Psi$-HFDE \eqref{e11}-\eqref{e13} reduces to Caputo impulsive FDE of the form:
\begin{align}
& ^CD^{\mu}_{0^+}u(t)=f(t, u(t)),~t \in \mathcal{J}=[a,T]-\{t_1, t_2,\cdots ,t_m\},\label{ap11}\\
&\Delta u(t_k)= \zeta_k \in \mathbb{R},  ~k = 1,2,\cdots,m, \label{ap12}\\
& u(a)=\delta \in \mathbb{R}, \label{ap13}
\end{align}
and we have the following existence and uniqueness theorems for Caputo impulsive FDE \eqref{ap11}-\eqref{ap13} as an applications of the Theorem \ref{ex} and Theorem \ref{unique1}.

\begin{theorem}\label{excaputo}
Assume that the function $f\in C(\mathcal{J}, \, \mathbb{R})$  satisfies the Lipschitz  condition
$$
|f(t,u)-f(t,v)|\leq L |u-v|, ~ t\in \mathcal{J}, ~u,v \in \mathbb{R}$$ with $0<L\leq \dfrac{\Gamma(\mu+1)}{2(T-a)^\mu}$.  Then, the Caputo impulsive FDE \eqref{ap11}-\eqref{ap13} has at least one solution in the  space ${\mathcal{PC}}\left( \mathcal{J},\,\mathbb{R}\right)$.
\end{theorem}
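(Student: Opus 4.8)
The plan is to obtain Theorem \ref{excaputo} as a direct corollary of the general existence result, Theorem \ref{ex}, by specializing the parameters. First I would set $\Psi(t)=t$ and pass to the limit $\nu\to 1$. Under this choice $\varrho=\mu+\nu-\mu\nu\to 1$, so that $1-\varrho\to 0$; consequently the left $\Psi$--Riemann--Liouville operator $\mathbf{I}_{a^+}^{1-\varrho;\,\Psi}$ degenerates to the identity, the $\Psi$--Hilfer derivative ${^H\mathbf{D}^{\mu,\,\nu;\,\Psi}_{a^+}}$ reduces to the classical Caputo derivative ${^CD^{\mu}_{0^+}}$ (the interpolation property of the $\Psi$--Hilfer derivative for $\Psi(t)=t$, recalled in the Introduction), and the conditions \eqref{e12}--\eqref{e13} become $\Delta u(t_k)=\zeta_k$ and $u(a)=\delta$. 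Hence the problem \eqref{e11}--\eqref{e13} specializes exactly to the Caputo impulsive FDE \eqref{ap11}--\eqref{ap13}.

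Next I would identify the function space. With $\varrho=1$ the weight $(\Psi(t)-\Psi(a))^{1-\varrho}=(t-a)^0=1$, so the weighted space $\mathcal{PC}_{1-\varrho;\,\Psi}(\mathcal{J},\mathbb{R})$ collapses to $\mathcal{PC}_{0;\,\Psi}(\mathcal{J},\mathbb{R})=PC(\mathcal{J},\mathbb{R})$, exactly as noted in Section \ref{preliminaries}, with norm $\|u\|=\sup_{t\in\mathcal{J}}|u(t)|$. Then I would verify that the hypotheses $(A_1)$ and $(A_2)$ of Theorem \ref{ex} hold for this specialization: for $(A_1)$, if $u\in PC(\mathcal{J},\mathbb{R})$ then, by continuity of $f$, the map $t\mapsto f(t,u(t))$ is piecewise continuous with the appropriate one-sided limits at each $t_k$, so $f(\cdot,u(\cdot))\in PC(\mathcal{J},\mathbb{R})$; for $(A_2)$, the bound required in Theorem \ref{ex} is $0<L\leq \frac{\Gamma(\mu+\varrho)}{2\Gamma(\varrho)(\Psi(T)-\Psi(a))^\mu}$, which for $\varrho=1$, $\Gamma(1)=1$ and $\Psi(t)=t$ is precisely $0<L\leq \frac{\Gamma(\mu+1)}{2(T-a)^\mu}$, the hypothesis of Theorem \ref{excaputo}.

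Finally I would invoke Theorem \ref{ex} directly: all of its assumptions being met, the specialized problem \eqref{ap11}--\eqref{ap13} has at least one solution in $PC(\mathcal{J},\mathbb{R})$. I expect the only delicate point to be the justification that the limit $\nu\to 1$ of the $\Psi$--Hilfer derivative and of the representation formula \eqref{e14} is taken legitimately; but since this interpolation is standard and already recorded in the cited literature, the argument reduces to a clean specialization of parameters, with no new estimates needed.
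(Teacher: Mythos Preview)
Your proposal is correct and follows exactly the approach the paper takes: the paper presents Theorem \ref{excaputo} purely as an application of Theorem \ref{ex} under the specialization $\Psi(t)=t$, $\nu\to 1$ (hence $\varrho=1$), and gives no separate proof. Your careful identification of the reduced operators, the collapse of the weighted space to $PC(\mathcal{J},\mathbb{R})$, and the matching of the Lipschitz bound is precisely the specialization the authors intend.
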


\begin{theorem} \label{uniquecaputo}
Under the suppositions of the Theorem \ref{excaputo} the impulsive Caputo FDE \eqref{e11}-\eqref{e13} has a unique solution in the space ${\mathcal{PC}}\left( \mathcal{J},\,\mathbb{R}\right)$.
\end{theorem}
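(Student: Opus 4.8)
The plan is to obtain Theorem \ref{uniquecaputo} as a direct specialization of the general uniqueness result, Theorem \ref{unique1}, under the parameter choices $\Psi(t)=t$ and $\nu\to 1$. First I would record the effect of these choices on the structural quantities appearing in Theorem \ref{unique1}: with $\nu\to 1$ one has $\varrho=\mu+\nu-\mu\nu\to 1$, whence $\Gamma(\varrho)=\Gamma(1)=1$ and $\Gamma(\mu+\varrho)=\Gamma(\mu+1)$; with $\Psi(t)=t$ one has $\Psi(T)-\Psi(a)=T-a$ and $\mathcal{L}_{\Psi}^{\mu}(t,\sigma)=(t-\sigma)^{\mu-1}$, so that $\mathbf{I}_{a^+}^{\mu;\,\Psi}$ is the ordinary Riemann--Liouville integral and $^H\mathbf{D}^{\mu,\,\nu;\,\Psi}_{a^+}$ collapses to the Caputo derivative $^CD^{\mu}_{0^+}$. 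Moreover the weighted space degenerates: since $1-\varrho=0$, $\mathcal{PC}_{1-\varrho;\,\Psi}(\mathcal{J},\mathbb{R})=\mathcal{PC}_{0;\,t}(\mathcal{J},\mathbb{R})=PC(\mathcal{J},\mathbb{R})$ with $\|\cdot\|_{\mathcal{PC}_{1-\varrho;\,\Psi}}$ reducing to the supremum norm, and the side conditions $\Delta\mathbf{I}_{a^+}^{1-\varrho;\,\Psi}u(t_k)=\zeta_k$ and $\mathbf{I}_{a^+}^{1-\varrho;\,\Psi}u(a)=\delta$ become $\Delta u(t_k)=\zeta_k$ and $u(a)=\delta$, i.e.\ exactly \eqref{ap12}--\eqref{ap13}. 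Thus the impulsive $\Psi$-HFDE \eqref{e11}--\eqref{e13} is, in this specialization, precisely the Caputo impulsive FDE \eqref{ap11}--\eqref{ap13}.

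Next I would verify that the hypotheses of Theorem \ref{unique1} hold for the data of Theorem \ref{excaputo}. Condition $(A_1)$ requires $f(\cdot,u(\cdot))\in\mathcal{PC}_{1-\varrho;\,\Psi}(\mathcal{J},\mathbb{R})$ whenever $u$ lies in that space; here the space is $PC(\mathcal{J},\mathbb{R})$, so for $u\in PC(\mathcal{J},\mathbb{R})$ the continuity of $f$ on $(a,T]\times\mathbb{R}$ immediately gives $f(\cdot,u(\cdot))\in PC(\mathcal{J},\mathbb{R})$, and $(A_1)$ is satisfied. Condition $(A_2)$ asks for a Lipschitz constant $L$ with $0<L\leq\frac{\Gamma(\mu+\varrho)}{2\Gamma(\varrho)(\Psi(T)-\Psi(a))^{\mu}}$; by the reductions just noted this bound equals $\frac{\Gamma(\mu+1)}{2(T-a)^{\mu}}$, which is exactly the restriction imposed in Theorem \ref{excaputo}. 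Hence both $(A_1)$ and $(A_2)$ are in force.

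Finally I would invoke Theorem \ref{unique1} directly: the operator $\mathcal{T}$ of that proof reduces here to $\mathcal{T}u(t)=\delta+\sum_{a<t_k<t}\zeta_k+\mathbf{I}_{a^+}^{\mu}f(t,u(t))$ on the ball $\mathcal{B}_r\subset PC(\mathcal{J},\mathbb{R})$, the Lipschitz estimate yields $\|\mathcal{T}u-\mathcal{T}v\|_{PC}\leq\tfrac12\|u-v\|_{PC}$, and the Banach contraction principle (together with Lemma \ref{fie}, which at $\varrho=1$ gives the equivalence with \eqref{e14}) produces a unique fixed point, that is, a unique solution of \eqref{ap11}--\eqref{ap13} in $PC(\mathcal{J},\mathbb{R})$. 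I do not expect a genuine obstacle; the only point deserving a line of care is the endpoint degeneracy $\nu\to 1$, $\varrho=1$, where one must confirm that Lemma \ref{fie} and the contraction argument of Theorem \ref{unique1} remain valid (the weighted factor $(\Psi(t)-\Psi(a))^{1-\varrho}$ becoming $1$, and the identity $^H\mathbf{D}^{\mu,\,\nu;\,\Psi}_{a^+}(\Psi(t)-\Psi(a))^{\varrho-1}=0$ becoming the trivial $^CD^{\mu}_{0^+}(1)=0$), so that nothing in the chain of reductions is vacuous.
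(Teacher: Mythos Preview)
Your proposal is correct and follows exactly the paper's approach: the paper offers no separate proof of Theorem \ref{uniquecaputo} but simply presents it as the specialization of Theorem \ref{unique1} under $\Psi(t)=t$ and $\nu\to 1$. You have in fact written out more detail than the paper provides, carefully checking that $(A_1)$--$(A_2)$, the weighted space, and the Lipschitz bound all reduce correctly in this limit.
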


\section{Examples}
In this section, we give examples to illustrate the utility of the results we obtained. 

\begin{ex}
Consider, the impulsive $\Psi$-HFDE 
\begin{align}
&^H \mathbf{D}^{\mu, \, \nu; \, \Psi}_{0^+}u(t)=\frac{9}{5\Gamma(\frac{2}{3})}(\Psi(t)-\Psi(0))^{\frac{5}{3}}-\frac{(\Psi(t)-\Psi(0))^4}{16}+\frac{1}{16}u^2, t\in [0,1]-\left\{\frac{1}{2}\right\},\label{p1}\\
&\mathbf{I}_{0^+}^{1-\varrho; \, \Psi}u(0)=0, \label{p2} \\
&\Delta \mathbf{I}_{0^+}^{1-\varrho; \, \Psi}u\left( \frac{1}{2}\right) = \sigma\in \mathbb{R},\label{p3} 
\end{align}
$0<\mu<1,~0\leq\nu\leq 1, ~\varrho=\mu+\nu-\mu\nu$ and $\Psi:[0,1] \to \mathbb{R}$ is as defined in preliminaries.

Define the function $f:[0,1]\times\mathbb{R} \to \mathbb{R}$ by  
$$
f(t,u)=\frac{9}{5\Gamma(\frac{2}{3})}(\Psi(t)-\Psi(0))^{\frac{5}{3}}-\frac{(\Psi(t)-\Psi(0))^4}{16}+\frac{1}{16}u^2.
$$

Clearly, $$|f(t,u)-f(t,v)|\leq\frac{1}{8}|u-v|, u, v \in \mathbb{R}, ~ t \in [0,1].$$
Thus $f$ satisfies the Lipschitz condition with the constant $L=\frac{1}{8}.$  
If the function $\Psi$ satisfies the condition 
\begin{align}\label{L}
L\leq \frac{\Gamma(\mu+\varrho)}{2\Gamma(\varrho)(\Psi(1)-\Psi(0))^\mu}
\end{align}
then problem \eqref{p1}-\eqref{p3} has unique solution.

For instance, consider the  particular case of the problem \eqref{p1}-\eqref{p3}. By  taking $$\Psi(t)=t, ~\mu=\frac{1}{3} ~\mbox{and} ~~\nu \to 1.$$ 

Then the problem \eqref{p1}-\eqref{p3} reduces to impulsive FDE involving Caputo fractional derivative operator of the form:
\begin{align}
&^CD^{\frac{1}{3}}_{0^+}u(t)=\frac{9}{5\Gamma(\frac{2}{3})}t^{\frac{5}{3}}-\frac{t^4}{16}+\frac{1}{16}u^2, t\in [0,1]-\left\{\frac{1}{2}\right\} \label{p4}\\
&\Delta u\left( \frac{1}{2}\right) = 0\label{p5}\\
&u(0)=0. \label{p6}
\end{align}

Note that
\begin{align*}
\frac{\Gamma(\mu+\varrho)}{2\Gamma(\varrho)(\Psi(T)-\Psi(a))^\mu}= \frac{1}{2} \, \Gamma \left( \frac{4}{3}\right) \approx 0.445.
\end{align*}

Since $L =\frac{1}{8}$, the condition \eqref{L} is satisfied. Using the Theorem \ref{ex}  with $\Psi(t)=t,~a=0, ~T=1, ~\mu=\frac{1}{3} ~\mbox{and} ~~\nu \to 1$ the problem \eqref{p4}-\eqref{p6} has a  solution on $[0,1]$. 

By direct substitution one can verify that $ u(t)=t^2$ is the solution of the problem \eqref{p4} - \eqref{p6}.
\end{ex}

\begin{ex}
Consider an impulsive $\Psi$-HFDE
\begin{align}
&^H \mathbf{D}^{\mu, \, \nu; \, \Psi}_{0^+}u(t)=\frac{\sin^4(\Psi(t)-\Psi(0))}{((\Psi(t)-\Psi(0))+3)^3} \frac{|u(t)|}{1+|u(t)|}, t\in [0,1]-\left\{\frac{1}{3}\right\},\label{p7}\\
&\Delta \mathbf{I}_{0^+}^{1-\varrho; \, \Psi}u\left(\frac{1}{3}\right)= \sigma ,\label{p8} \\
&\mathbf{I}_{0^+}^{1-\varrho; \, \Psi}u(0)=\delta,\label{p9}
\end{align}
where $0<\mu<1,~0\leq\nu\leq 1, ~\varrho=\mu+\nu-\mu\nu$.

Define the function $f:[0,1]\times\mathbb{R} \to \mathbb{R}$ by   $$f(t,u)= \frac{\sin^4(\Psi(t)-\Psi(0))}{((\Psi(t)-\Psi(0))+3)^3} \frac{|u|}{1+|u|}.$$

Let $u,v \in \mathbb{R}  ~\mbox{and} ~~t\in [0,1]$.  Then,
\begin{align*}
|f(t,u)- f(t,v)|&= \left|\frac{\sin^4(\Psi(t)-\Psi(0))}{((\Psi(t)-\Psi(0))+3)^3} \frac{|u|}{1+|u|}-\frac{\sin^4(\Psi(t)-\Psi(0))}{((\Psi(t)-\Psi(0))+3)^3} \frac{|v|}{1+|v|}\right|\\
& \leq \frac{1}{\left( (\Psi(t)-\Psi(0))+3\right) ^3} \left| \frac{|u|}{1+|u|}- \frac{|v|}{1+|v|} \right|\\
&\leq  \frac{1}{\left( (\Psi(1)-\Psi(0))+3\right) ^3}|u-v|.
\end{align*}

This proves $f$ is Lipschitz function with the constant 
$$ L=\frac{1}{\left( (\Psi(1)-\Psi(0))+3\right) ^3}.$$  

By Theorem \ref{ex} the problem \eqref{p7}-\eqref{p9} has a  solution if 
$$\frac{1}{\left( (\Psi(1)-\Psi(0))+3\right) ^3}\leq \frac{\Gamma(\mu+\varrho)}{2\Gamma(\varrho)(\Psi(1)-\Psi(0))^\mu}.$$
\end{ex}
\section*{Concluding remarks}

We close the present paper with the destinations we accomplished. We investigated the existence and uniqueness of solutions of nonlinear $\Psi$-HFDE and of also their respective extension to nonlocal case by means of strong analysis results. Some examples were illustrated in order to elucidate the results obtained. It is noted that since the $\Psi$-Hilfer fractional derivative is global and contains a wide class of fractional derivatives, the properties investigated herein are also valid for their respective particular cases. 

Here we have not investigated the continuous dependence on the various data and Ulam-Hyers stabilities of solution of (\ref{e11})-(\ref{e13}), which is the point of  our next investigation and will be published a future work.

Now, if we consider $\Psi(t)=t$ in the problem (\ref{e11})-(\ref{e13}) with $A: D(A)\subset X \rightarrow X$ generator of $C_{0}$-semigroup ($\mathbb{P}_{t\geq 0}$) on a Banach space $X$, we have the following impulsive $\Psi$-HFDE with initial condition
\begin{align}
& ^H \mathbf{D}^{\mu,\, \nu}_{a^+}u(t)=Au(t)+f(t, u(t)),~t \in \mathcal{J}=[a,T]-\{t_1, t_2,\cdots ,t_m\}, \label{r1}\\
&\Delta \mathbf{I}_{a^+}^{1-\varrho}u(t_k)= \zeta_k \in \mathbb{R},  ~k = 1,2,\cdots,m, \\
& \mathbf{I}_{a^+}^{1-\varrho; \, \Psi}u(a)=\delta \in \mathbb{R}\label{r3}, 
\end{align}
with the same conditions of problem (\ref{e11})-(\ref{e13}). The next step of the research is to analyze the problem (\ref{r1})-(\ref{r3}). But the question may arise `` Why not get the existence and uniqueness of mild solutions to the problem (\ref{r1})-(\ref{r3}), with a formulation in the sense?". The reason for non-investigation with the $\Psi$-Hilfer fractional derivative comes from the fact of the non-existence of an integral transform, in particular, of Laplace with respect to another function, since it is an important condition in the investigation of the mild solution. Research in this sense has been developed and, in the near future, results can be published.

\section*{Acknowledgment}
The third author of this paper is financially supported by the PNPD-CAPES scholarship of the Pos-Graduate Program in Applied Mathematics IMECC-Unicamp.

\end{document}